\theoremstyle{plain}
   \newtheorem{teo}{Theorem}
   \newtheorem{lema}[teo]{Lemma}
\theoremstyle{definition}
\theoremstyle{remark}
\numberwithin{equation}{section}
\newcommand{\bmo}{\mathrm{BMO}}
\newcommand{\N}{\mathbb{N}} % Conjunto N
\newcommand{\Z}{\mathbb{Z}} % Conjunto Z
\newcommand{\R}{\mathbb{R}} % Conjunto R
\newcommand{\s}[3]{\sum_{#1}^{#2}{#3}} % Sumatoria desde ... hasta... de ...
\newcommand{\normbmo}[1]{\|#1\|_{\mathrm{BMO}}} % Norma BMO de ...
\newcommand{\norm}[1]{\|#1\|} % Norma de ...
\newcommand{\normbmows}[1]{\|#1\|_{\mathrm{BMO}_w^*}} % Norma BMO de ...
\newcommand{\normexpl}[1]{\|#1\|_{\mathrm{exp}L,Q}}
\newcommand{\normexplw}[1]{\|#1\|_{\mathrm{exp}L,Q,w}}
\newcommand{\normexplv}[3]{\|#1\|_{\mathrm{exp}L,#3,#2}}
\newcommand{\normlloglv}[3]{\|#1\|_{L\mathrm{log}L,#3,#2}}
\newcommand{\supp}{\mathrm{supp}}
\begin{document}

\title[Mixed weak estimates of Sawyer type for commutators]{Mixed weak estimates of Sawyer type for commutators of singular integrals and related operators}

% Information for first author
\author[F. Berra]{Fabio Berra}
\address{CONICET and Departamento de Matem\'{a}tica (FIQ-UNL),  Santa Fe, Argentina.}
\email{fberra@santafe-conicet.gov.ar}

%Information for second author
\author[M. Carena]{Marilina Carena}
\address{CONICET (FIQ-UNL) and Departamento de Matem\'{a}tica (FHUC-UNL),  Santa Fe, Argentina.}
\email{marilcarena@gmail.com}

%Information for third author
\author[G. Pradolini]{Gladis pradolini}
\address{CONICET and Departamento de Matem\'{a}tica (FIQ-UNL),  Santa Fe, Argentina.}
 \email{gladis.pradolini@gmail.com}

\thanks{The authors were supported by CONICET, UNL and ANPCyT}

\subjclass[2010]{42B20, 42B25}

\keywords{Muckenhoupt weights, BMO, commutators}

%%% ----------------------------------------------------------------------

\begin{abstract} We study mixed weak type inequalities for the commutator
$[b,T]$, where $b$ is a BMO function and $T$ is a Calder\'on-Zygmund
operator. More precisely, we prove that for every  $t>0$
\begin{equation*}%\label{tesis_teo2.2}
uv\left(\left\{x\in\R^n:
\left|\frac{[b,T](fv)(x)}{v(x)}\right|>t\right\}\right)\leq
C\int_{\R^n}\phi\left(\frac{|f(x)|}{t}\right)u(x)v(x)\,dx,
\end{equation*}
where $\phi(t)=t(1+\log^{+}{t})$,  $u\in A_1$ and $v\in
A_{\infty}(u)$. Our technique involves the classical
Calder\'on-Zygmund decomposition, which allow us to give a direct
proof. We use this result to prove an analogous inequality for
higher order commutators. We also obtain a mixed estimation for a
wide class of maximal operators associated to certain Young
functions of $L\log L$ type which are in intimate relation with the
commutators. This last estimate involves an arbitrary weight $u$ and a
radial function $v$ which is not even locally integrable.
\end{abstract}

%%% ----------------------------------------------------------------------
\maketitle
%%% ----------------------------------------------------------------------

\section*{Introduction}

In \cite{CU-M-P} the authors considered weighted weak type norm inequalities given by
\begin{equation}\label{eq: mixed}
uv\left(\left\{x\in\R^n: \frac{T(fv)(x)}{v(x)}>t\right\}\right)\leq \frac{C}{t}\int_{\R^n}|f(x)|u(x)v(x)\,dx, \hspace{1truecm} t>0
\end{equation}
for some positive constant $C$, where $T$ is either the Hardy-Littlewood  maximal operator or any Calder\'on-Zygmund
operator. The authors proved that \eqref{eq: mixed} holds if $u,v$ are weights such that $u,v\in A_1$, or $u\in A_1$ and $v\in
A_{\infty}(u)$. This result proves the conjecture given by Sawyer in \cite{Sawyer}, where \eqref{eq: mixed} is proved in $\mathbb R$
for the Hardy-Littlewood  maximal operator $M$ and $u,v\in A_1$. The author also conjectured that the inequality  holds if $T$ is
the Hilbert transform. The motivation of Sawyer for consider \eqref{eq: mixed}
yields a new proof of the classical Muckenhoupt's
Theorem  concerning to the boundedness of $M$ in $L^p(w)$, for $1<p<\infty$ and $w\in A_p$. Indeed, given $w\in A_p$, from the P. Jones
factorization Theorem  we have that $w=uv^{1-p}$, with $u,v\in A_1$, so that the operator $S(f)=M(fv)/v$ is bounded on $L^\infty(uv)$.
Hence, the Muckenhoupt's Theorem is obtained from the usual Marcinkiewicz interpolation Theorem provided that $S$ is of weak type
 $(1,1)$ with respect to the measure $uvdx$, which is precisely \eqref{eq: mixed} with $T=M$ (see \cite{Stein_Weiss}).

In this paper  we study inequalities of the type described in
\eqref{eq: mixed} for higher order commutators of Calder\'on-Zygmund
operators with BMO symbols, generalizing the results obtained in
\cite{CU-M-P}. However, our techniques are quite different of those
given in this article. As far as we know, this type of estimates are
new even for the case of the first order commutator.
\\
We also obtain an analogous mixed estimation for generalized maximal
operators associated to higher order
commutators which are defined by means of a Young function. This estimate extends the results given in \cite{O-P} to a wide class of maximal
 operators involving Luxemburg averages.\\
There is a close relationship between the boundedness properties of
commutators acting on different functional spaces and partial
differential equations, and it is well known that the continuity
properties of such operators provides us with regular solutions of
certain PDE's. Several authors were working in this direction, (see,
for example \cite{BCM}, \cite{BCe}, \cite{ChFL}, \cite{ChFL2},
\cite{DR} and \cite{Rios} between a vast amount of articles).
Therefore, it seems appropriate to explore the weighted inequalities
for that operators and, particularly, we shall be concerned with the
mixed estimates mentioned above.

Recall that a linear operator $T$ is a \emph{Calder\'on-Zygmund operator}  if $T$ is bounded on $L^2(\mathbb R^n)$ and there exists a standard kernel $K$
such that for $f\in L^2$ with compact support,
\[Tf(x)=\int_{\mathbb R^n} K(x,y)f(y)\,dy,\quad\quad x\notin \supp f.\]
We say that $K:\mathbb R^n\times \mathbb R^n\backslash \Delta\to\mathbb C$ is a \emph{standard kernel} if it satisfies a size condition given by %there exists $\delta>0$ such that
\[|K(x,y)|\leq \frac{C}{|x-y|^n},\]
and the smoothness conditions
\begin{equation}\label{eq:prop del nucleo}
|K(x,y)-K(x,z)|\leq C\frac{|x-z|}{|x-y|^{n+1}},\quad \textrm{ if } |x-y|>2|y-z|,
\end{equation}
\[|K(x,y)-K(w,z)|\leq C\frac{|x-w|}{|x-y|^{n+1}},\quad \textrm{ if } |x-y|>2|x-w|.\]
Recall, as well, that the \emph{commutator operator} $[b,T]$ is formally defined, for adequate functions $f$, by
\[[b,T]f=bT(f)-T(bf).\]

We are now in position to state our main result.
\bigskip
\begin{teo}\label{teo: main}
Let $u,v$ be weights such that $u\in A_1$ and $v\in A_{\infty}(u)$.
Let $T$ be any Calder\'on-Zygmund
operator  and let
$b\in$BMO. Then, for every  $t>0$ we have that
\bigskip
\begin{equation*}%\label{tesis_teo2.2}
uv\left(\left\{x\in\R^n:
\left|\frac{[b,T](fv)(x)}{v(x)}\right|>t\right\}\right)\leq
C\int_{\R^n}\Phi\left(\normbmo{b}\frac{|f(x)|}{t}\right)u(x)v(x)\,dx,
\end{equation*}
\bigskip
where $\Phi(t)=t(1+\log^{+}{t})$.
\end{teo}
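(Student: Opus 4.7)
The strategy is to normalize by homogeneity, assuming $\normbmo{b}=1$ and $t=1$, then apply the classical Calder\'on--Zygmund decomposition of $|f|$ at height $1$. This yields a disjoint family of dyadic cubes $\{Q_j\}$ with $1<|Q_j|^{-1}\int_{Q_j}|f|\leq 2^n$ and a splitting $f=g+h$, where $g=f\chi_{\Omega^c}+\sum_j f_{Q_j}\chi_{Q_j}$ is pointwise bounded by $2^n$ (with $\Omega=\bigcup_j Q_j$), and $h=\sum_j h_j$ with $h_j=(f-f_{Q_j})\chi_{Q_j}$ having zero integral. Fixing a fixed dilation $\tilde Q_j$ of $Q_j$ and setting $\tilde\Omega=\bigcup_j\tilde Q_j$, the level set is split as
\begin{equation*}
uv(E_1)\leq uv(\tilde\Omega)+uv\bigl(\{x\notin\tilde\Omega:|[b,T](gv)(x)/v(x)|>1/2\}\bigr)+uv\bigl(\{x\notin\tilde\Omega:|[b,T](hv)(x)/v(x)|>1/2\}\bigr).
\end{equation*}

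For the first term, $u\in A_1$ gives $u(\tilde Q_j)\lesssim u(Q_j)$, and $v\in A_\infty(u)$ supplies a reverse H\"older inequality relative to $u\,dx$ that promotes this to $uv(\tilde Q_j)\lesssim uv(Q_j)$; combined with the stopping condition $|Q_j|<\int_{Q_j}|f|$ and a generalized H\"older inequality with the complementary Young pair $(\Phi,\exp L)$, the sum is dominated by $\int\Phi(|f|)\,uv$. For the good part, I plan to combine Chebyshev with an $L^p$ estimate for $[b,T]$ against a weight built from $u$ and $v$ (obtained by interpolation off a suitable weighted endpoint), using $\|g\|_\infty\leq C$ on $\Omega$ and $|g|\leq|f|$ on $\Omega^c$ to close the bound.

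The delicate piece is the bad term on $\tilde\Omega^c$, treated via the pointwise commutator identity
\begin{equation*}
[b,T](h_jv)(x)=(b(x)-b_{Q_j})T(h_jv)(x)-T\bigl((b-b_{Q_j})h_jv\bigr)(x).
\end{equation*}
On $(\tilde Q_j)^c$, the kernel smoothness \eqref{eq:prop del nucleo} together with the cancellation $\int h_j=0$ controls $|T(h_jv)|$ by $Q_j$-averages, while the second summand is similarly estimated after subtracting the constant $K(x,c_j)$ and appealing to smoothness. The $A_1$ property of $u$ passes $u$-integration back into averages over $Q_j$, and the $A_\infty(u)$ condition on $v$ converts these into $uv$-averages; the BMO factor $b(\cdot)-b_{Q_j}$ is then absorbed through John--Nirenberg and generalized H\"older with the complementary Young pair $(L\log L,\exp L)$, producing precisely the $\Phi(|f|)\,uv$ right-hand side.

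The principal obstacle is exactly this final exterior estimate: unlike the non-commutator case \cite{CU-M-P}, commutators fail the weak $(1,1)$ endpoint, so $L\log L$-type Luxemburg averages are unavoidable, and the bookkeeping of the BMO symbol (forcing $\exp L$-duality), the Orlicz structure of $\Phi$, the $A_1$ hypothesis on $u$, and the $A_\infty(u)$ hypothesis on $v$ must be coordinated through generalized H\"older without introducing spurious logarithmic losses. Ensuring that the $u$-integration against kernel tails turns into integration against $uv$ of $\Phi(|f|)$ — rather than of $|f|(1+\log^+|f|)^k$ for $k\geq 2$ — is the technical crux of the argument.
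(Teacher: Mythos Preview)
There is a genuine gap: your Calder\'on--Zygmund decomposition is taken with respect to Lebesgue measure, so $\int_{Q_j}h_j=0$, but the operator is applied to $h_jv$. The cancellation actually needed in the kernel-smoothness estimate for $T(h_jv)$ on $(\tilde Q_j)^c$ is $\int_{Q_j}h_j(y)v(y)\,dy=0$, which fails in your setup. Without it, inserting $K(x,x_{Q_j})$ leaves the residual $K(x,x_{Q_j})\int_{Q_j}h_jv$, of size roughly $|x-x_{Q_j}|^{-n}\bigl|\int_{Q_j}h_jv\bigr|$, and the sum over dyadic shells diverges. The paper avoids this by performing the decomposition of $f$ at height $t$ \emph{with respect to the doubling measure $d\mu=v\,dx$}: then $h_j=(f-f_{Q_j}^v)\chi_{Q_j}$ with $f_{Q_j}^v=v(Q_j)^{-1}\int_{Q_j}fv$, so that $\int_{Q_j}h_jv=0$ exactly. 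The stopping condition becomes $t<f_{Q_j}^v\leq Ct$, and the key device for passing between measures is Lemma~\ref{lema: fundamental}, namely $(uv)(Q)/v(Q)\leq C\inf_Q u$, which turns $v$-averages into $uv$-integrals throughout (this also handles $uv(\Omega^*)$; your route via $|Q_j|<\int_{Q_j}|f|$ would need $uv\in A_1$, which is not available).

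A second problem concerns the piece $\sum_j T\bigl((b-b_{Q_j})h_jv\bigr)$: there is no cancellation here even with the $v$-weighted decomposition, so the plan to ``subtract $K(x,c_j)$ and appeal to smoothness'' cannot work. The paper treats this summand not via kernel estimates but by invoking the already-known mixed weak $(1,1)$ bound for $T$ itself (Theorem~\ref{teo1.7}), obtaining $\frac{C}{t}\sum_j\int_{Q_j}|b-b_{Q_j}|\,|h_j|\,uv$, which is then split according to $h_j=(f-f_{Q_j}^v)\chi_{Q_j}$ and controlled by generalized H\"older in the pair $L\log L$--$\exp L$ against the measure $uv\,dx$ (using Lemma~\ref{lema: con pesos menor que sin pesos} and Lemma~\ref{lema: perez}). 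Finally, for the good part the concrete mechanism is: choose $q'>1$ with $v\in A_{q'}(u)$, so $uv^{1-q}\in A_q$ by Lemma~\ref{lema1.6}, whence $[b,T]$ is bounded on $L^q(uv^{1-q})$; Chebyshev plus $g\leq Ct$ closes the estimate. Your ``interpolation off a suitable weighted endpoint'' does not identify this weight.
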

The theorem above is a starting point to prove by induction
an analogous inequality for higher order commutators, denoted  by $T^m_b$, for a non negative integer $m$ and defined by induction as follows:
%\[T^m_b=\underbrace{[b,\dots,[b,T]]}_{(m \textrm{ times})}.\]
 $T^0_b=T$ and $T^m_b=[b, T^{m-1}_b]$ for $m\geq1$. If $x$ is not in the support of $f$ then it is clear that
\[T^m_bf(x)=\int\left(b(x)-b(y)\right)^m K(x-y)f(y)\,dy.\]
Thus we obtain the following result.
%In \cite{Carlos-sharp} is proved that if $w\in A_q$ then $T_b^m:L^q(w)\hookrightarrow L^q(w)$ for every $m\in\N$.
\bigskip

\begin{teo}\label{teo: orden m}
Let $u,v$ be weights such that $u\in A_1$ and $v\in A_{\infty}(u)$.
Let $T$ be any Calder\'on-Zygmund
operator and let
$b\in$ BMO. Then, for every  $t>0$ and every positive integer $m$ we have that
\bigskip
\begin{equation}\label{tesis_teo2.2}
uv\left(\left\{x\in\R^n:
\left|\frac{T_b^m(fv)(x)}{v(x)}\right|>t\right\}\right)\leq
C\int_{\R^n}\Phi_m\left(\normbmo{b}^m\frac{|f(x)|}{t}\right)u(x)v(x)\,dx,
\end{equation}
\bigskip
where $\Phi_m(t)=t(1+\log^{+}{t})^m$.
\end{teo}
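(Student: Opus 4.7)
The plan is to induct on $m$, with the base case $m=1$ being exactly Theorem~\ref{teo: main}. Fix $m\ge 2$ and assume \eqref{tesis_teo2.2} has been established for all orders strictly less than $m$. I first normalize $\normbmo{b}=1$ via the homogeneity $T^m_{b/\normbmo{b}}=\normbmo{b}^{-m}T^m_b$, which eliminates the factor $\normbmo{b}^m$ from the Young function and reduces the task to showing $uv(\{|T^m_b(fv)/v|>t\})\le C\int \Phi_m(|f|/t)\,uv$.

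The engine of the induction is the pointwise identity, valid for any real constant $\lambda$,
\[
T_b^m g(x)\;=\;\bigl(b(x)-\lambda\bigr)\,T_b^{m-1}g(x)\;-\;T_b^{m-1}\bigl((b-\lambda)g\bigr)(x),
\]
obtained by writing $(b(x)-b(y))^m=(b(x)-\lambda)(b(x)-b(y))^{m-1}-(b(y)-\lambda)(b(x)-b(y))^{m-1}$ inside the kernel representation. Applied with $g=fv$ and divided by $v(x)$, this decomposes $T^m_b(fv)/v$ into a \emph{multiplicative} term $(b-\lambda)\,T^{m-1}_b(fv)/v$ and a \emph{reduced-order} term $T^{m-1}_b((b-\lambda)fv)/v$. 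To the latter I apply the inductive hypothesis with input $(b-\lambda)f$, producing a bound of the shape
\[
C\int_{\R^n}\Phi_{m-1}\!\left(\frac{|b(x)-\lambda||f(x)|}{t}\right)u(x)v(x)\,dx,
\]
and then absorb the factor $|b-\lambda|$ using the generalized Luxemburg--H\"older inequality $\Phi_m^{-1}(s)\,\Psi^{-1}(s)\lesssim \Phi_{m-1}^{-1}(s)$ with $\Psi(s)\sim e^s-1$: John--Nirenberg places $b-\lambda$ in $\exp L$ locally, and the single extra logarithm in $\Phi_m$ is precisely what is needed to accommodate it. The multiplicative term is treated along the same lines after performing a Calder\'on--Zygmund decomposition of $f$ at level $\sim t$ (analogous to the one that underlies the proof of Theorem~\ref{teo: main}) and choosing $\lambda=b_{Q_j}$ on each CZ cube $Q_j$, which makes $(b-b_{Q_j})$ exponentially integrable on the enlargement $\bigcup 2Q_j$ while the complement is controlled by the smoothness of the CZ kernel combined with the cancellation of the bad parts.

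The main obstacle I expect is the careful bookkeeping of the compatibility between the Young functions $\Phi_{m-1}$ and $\Phi_m$ in the \emph{weighted} setting: one must verify that the gain of a single logarithm really does absorb the BMO factor uniformly with respect to the measure $uv$, with no residual term of worse growth surviving the sum over CZ cubes. The weight hypotheses $u\in A_1$ and $v\in A_\infty(u)$ will not need to be revisited in the inductive step itself, since they are already built into the inductive hypothesis and into Theorem~\ref{teo: main}; the argument only needs to propagate them through the splitting above, together with the usual $A_\infty(u)$-weighted John--Nirenberg estimate for BMO oscillations.
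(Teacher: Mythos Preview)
Your inductive plan and the handling of the ``reduced-order'' piece $T_b^{m-1}((b-\lambda)fv)/v$ are in the same spirit as the paper (its term $I_3$). The gap is in the ``multiplicative'' piece. After the Calder\'on--Zygmund decomposition with $\lambda=b_{Q_j}$ on each cube, this piece becomes, for $x\notin\Omega^\ast$,
\[
\sum_j (b(x)-b_{Q_j})\,T_b^{m-1}(h_jv)(x),
\]
and you assert that off the enlarged cubes it is ``controlled by the smoothness of the CZ kernel combined with the cancellation of the bad parts.'' But the operator here is $T_b^{m-1}$, with kernel $(b(x)-b(y))^{m-1}K(x-y)$; since $b$ is only in $\bmo$, this kernel has no H\"older regularity in $y$, and the standard trick of subtracting $K(x-x_{Q_j})$ and invoking $\int_{Q_j}h_jv=0$ does not produce the extra factor $|y-x_{Q_j}|/|x-x_{Q_j}|$ that makes the annulus sum converge. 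Concretely, after subtraction one is left with a residual $K(x-x_{Q_j})\int_{Q_j}\bigl[(b(x)-b(y))^{m-1}-(b(x)-b(x_{Q_j}))^{m-1}\bigr]h_j(y)v(y)\,dy$, which carries only the size bound $|x-x_{Q_j}|^{-n}$, and the corresponding $x$-integral over $\R^n\setminus Q_j^\ast$ diverges.

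The paper sidesteps this by replacing your one-step recursion with the \emph{full} binomial expansion
\[
T_b^m(h_jv)(x)=\sum_{\ell=0}^m C_{\ell,m}\,(b(x)-b_{Q_j})^{m-\ell}\,T\bigl((b-b_{Q_j})^\ell h_jv\bigr)(x),
\]
so that the only term requiring kernel smoothness and cancellation is $\ell=0$, namely $(b(x)-b_{Q_j})^m\,T(h_jv)(x)$, where the operator is the genuine Calder\'on--Zygmund $T$ with standard kernel $K$. The terms with $\ell\ge 1$ carry the oscillation $(b-b_{Q_j})^\ell$ \emph{inside} the operator and are handled, after a further rearrangement into $T_b^i((b-b_{Q_j})^{m-i}h_jv)$, by Theorem~\ref{teo1.7} and the inductive hypothesis, with no cancellation needed. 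Iterating your identity $m$ times would recover exactly this expansion, but a single iteration does not suffice. A secondary omission: you never address the good part of the decomposition; the paper disposes of it via Tchebychev at an exponent $q>1$ together with the $L^q(uv^{1-q})$ boundedness of $T_b^m$ (Lemma~\ref{lema1.6} and the remark after Lemma~\ref{lema: tipo fuerte Tbk}), and this step is not a consequence of the recursion.
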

%for every  $t>0$ and every positive integer $m$ we have that
%\begin{equation}\label{tesis_teo2.2}
%uv\left(\left\{x\in\R^n:
%\left|\frac{T_b^m(fv)(x)}{v(x)}\right|>t\right\}\right)\leq
%C\int_{\R^n}\Phi_m\left(\normbmo{b}^m\frac{|f(x)|}{t}\right)u(x)v(x)\,dx,
%\end{equation}
%where $\Phi_m(t)=t(1+\log^{+}{t})^m$.

Observe that since $\Phi_m$ is submultiplicative, that is, $\Phi_m(ab)\leq C\Phi_m(a)\Phi_m(b)$ for $ab\geq 0$, we have that \eqref{tesis_teo2.2}
 implies that\\

\[uv\left(\left\{x\in\R^n:
\left|\frac{T_b^m(fv)(x)}{v(x)}\right|>t\right\}\right)\leq
C \Phi_m\left(\normbmo{b}^m\right)\int_{\R^n}\Phi_m\left(\frac{|f(x)|}{t}\right)u(x)v(x)\,dx.\]\\

When $m=0$, that is $T_b^0=T$, the same estimate holds and our proof also works for this case. This estimation was first proved in \cite{CU-M-P}. However, our proof is quite different from that one since we shall not use the control of $T$ by the Hardy-Littlewood maximal operator, but the proof is straightforwardly related with the classical Calder\'on-Zygmund decomposition.\\

We can relax the hypotheses on the weights in both theorems above in order to obtain mixed inequalities for other operators. For example, in \cite{O-P} the authors give a mixed estimation for the Hardy-Littlewood maximal operator on $\R^n$ for the case in which $u$ is a weight and $v$ is a power that is not even locally integrable. We wonder if an analogous estimation holds for $M^2$. Indeed, we have proved a more general result, involving the operator $M_{\Phi}$ for the case $\Phi(t)=t^r(1+\log^+t)^\delta$, with $r\geq 1$ and $\delta\geq 0$. Observe that, when $r=1$ and $\delta=1$ this is the desired result, since it is well known that $M^2$ is equivalent to $M_{L\mathrm{log}L}$ (see next section).

The result that we obtain is the following.
\bigskip
\begin{teo}\label{teo: caso m_phi}
Let $u$ be a weight and $v(x)=|x|^\beta$, where $\beta<-n$. Define $w(x)=1/\Phi(v^{-1}(x))$, where $\Phi(t)=t^r(1+\log^+t)^{\delta}$ with $r\geq1$ and $\delta\geq 0$. Then, for every $t>0$,
\bigskip
\begin{equation}\label{caso m phi}
uw\left(\left\{x\in\R^n: \frac{M_{\Phi}(fv)}{v}>t\right\}\right)\leq C\int_{\R^n}\Phi\left(\frac{fv}{t}\right)Mu(x)\,dx.
\end{equation}
\end{teo}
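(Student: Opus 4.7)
The approach mirrors the proof of the mixed weak-type inequality for the Hardy--Littlewood maximal operator in \cite{O-P}, lifted to the Orlicz setting through the submultiplicativity of $\Phi(t)=t^{r}(1+\log^{+}t)^{\delta}$. By positive homogeneity $M_{\Phi}(\lambda g)=\lambda M_{\Phi}g$, it suffices to treat $t=1$ with $f\geq 0$. Set $E=\{x\in\R^{n}:M_{\Phi}(fv)(x)>v(x)\}$. For every $x\in E$ the definition of $M_{\Phi}$ provides a ball $B_{x}\ni x$ with
$$
\frac{1}{|B_{x}|}\int_{B_{x}}\Phi\!\left(\frac{fv}{v(x)}\right)dy>1,
$$
and a standard Vitali covering extracts from $\{B_{x}\}_{x\in E}$ a disjoint subfamily $\{B_{j}\}$, with witnesses $x_{j}\in B_{j}$, such that $E\subset\bigcup_{j}5B_{j}$.

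The first key is that $\Phi$ is submultiplicative, $\Phi(ab)\leq\Phi(a)\Phi(b)$, which reduces to the elementary $1+\log^{+}(ab)\leq(1+\log^{+}a)(1+\log^{+}b)$. Taking $a=fv$ and $b=1/v(x_{j})$ and integrating over $B_{j}$ yields
$$
\int_{B_{j}}\Phi(fv)\,dy\;\geq\;\frac{1}{\Phi(1/v(x_{j}))}\int_{B_{j}}\Phi\!\left(\frac{fv}{v(x_{j})}\right)dy\;>\;\frac{|B_{j}|}{\Phi(1/v(x_{j}))}. \qquad (\star)
$$
The second key is the pointwise comparison $w(x)\leq C\,w(x_{j})$ on $5B_{j}$, which follows because $w(x)=1/\Phi(|x|^{-\beta})$ is radial and doubling in $|x|$ away from the origin (so $|x|\sim|x_{j}|$ on $5B_{j}$ in that regime). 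Combined with the elementary $\int_{5B_{j}}u\leq 5^{n}|B_{j}|\inf_{B_{j}}Mu$ and with $(\star)$, we get
$$
\int_{5B_{j}}uw\;\leq\;\frac{C|B_{j}|}{\Phi(1/v(x_{j}))}\inf_{B_{j}}Mu\;\leq\;C\inf_{B_{j}}Mu\int_{B_{j}}\Phi(fv)\,dy\;\leq\;C\int_{B_{j}}\Phi(fv)\,Mu\,dy.
$$
Summation over the disjoint family $\{B_{j}\}$ and a final rescaling $f\mapsto f/t$ produce the claimed estimate \eqref{caso m phi}.

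The main obstacle is the geometric bound $w(x)\leq Cw(x_{j})$ on $5B_{j}$ when the dilation comes close to the origin, since there $w$ blows up and fails to be locally integrable. I would deal with this by a truncation: first prove the inequality for $f$ supported in $\{|y|\geq\eta\}$, where $\|fv\|_{\Phi,B_{x}}\leq\|fv\|_{\infty}<\infty$ forces $v(x)<\|fv\|_{\infty}$ on $E$, so that $E$ and the selected centers $x_{j}$ are confined to an annulus $\{|x|>c\}$ on which the radial doubling of $w$ applies to the $5$-fold dilations as well; the general case then follows by monotone convergence applied to $f_{n}=f\chi_{\{|y|\geq 1/n\}}$.
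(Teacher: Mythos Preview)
Your Vitali-covering argument breaks at the step $w(x)\leq Cw(x_{j})$ on $5B_{j}$, and the truncation you propose does not repair it. Restricting $f$ to $\{|y|\geq\eta\}$ and bounding $\|fv\|_{\infty}$ indeed confines the witness points $x_{j}\in E$ to a region $\{|x|>c\}$, but it places no constraint whatsoever on the \emph{radius} of the selected ball: the condition $\|fv\|_{\Phi,B_{j}}>v(x_{j})$ can be realised by a ball with $r_{j}\gg|x_{j}|$ (take $x_{j}$ at moderate distance from the origin and the mass of $\Phi(fv)$ concentrated far away). Then $5B_{j}$ contains a neighbourhood of the origin, and since $w(x)\approx|x|^{r\beta}$ with $r\beta<-n$ is not locally integrable there, $\int_{5B_{j}}uw=\infty$ whenever $u$ does not vanish near $0$; the chain $\int_{5B_{j}}uw\leq C\int_{B_{j}}\Phi(fv)\,Mu$ cannot hold with a uniform constant. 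Restricting the left side to $5B_{j}\cap\{|x|>c\}$ does not save the argument either: on that set one still has $w(x)/w(x_{j})$ of order $(c/|x_{j}|)^{r\beta}$, which is unbounded over the family because $|x_{j}|$ can be arbitrarily large while $5B_{j}$ reaches down to $\{|x|\sim c\}$. In short, the non-integrable singularity of $w$ at the origin is incompatible with a covering argument that exerts no control on the eccentricity $r_{j}/|x_{j}|$ of the selected balls.

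The paper sidesteps this obstruction by abandoning coverings in favour of an annular decomposition adapted to the radial weight: writing $\R^{n}=\bigcup_{k}G_{k}$ with $G_{k}=\{2^{k}<|x|\le 2^{k+1}\}$ (on which both $v$ and $w$ are essentially constant), one splits $g=fv$ for each $k$ into a nearby piece on $I_{k}=\{2^{k-1}<|y|\le 2^{k+2}\}$, a far piece on $L_{k}=\{|y|>2^{k+2}\}$, and a central piece on $C_{k}=\{|y|\le 2^{k-1}\}$. The nearby contribution is handled by the weighted modular weak-type inequality for $M_{\Phi}$ together with submultiplicativity, exactly as in your $(\star)$; the far and central contributions are controlled by pointwise bounds of the form $\|g\mathcal{X}_{L_{k}}\|_{\Phi,B}\le 1/\Phi^{-1}(1/F(x))$ and $\|g\mathcal{X}_{C_{k}}\|_{\Phi,B}\le 1/\Phi^{-1}(1/G(x))$ for explicit functions $F,G$, with the central term requiring the scale-selection Lemma~\ref{lema_igualdad_de_integral} and the precise growth of $\Phi$ to close the estimate. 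This three-way splitting is what replaces the unavailable doubling of $w$ across large balls.
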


By using the submultiplicativity of $\Phi$ the formula given in \eqref{caso m phi} can be rewritten as follows
\bigskip
\[\left\|\frac{M_{\Phi}(fv)}{v}\right\|_{\widetilde{L^\Psi}(uw)}:=\sup_{t>0}\Psi(t)\hspace{0.2truecm}uw\left(\left\{x:\frac{M_{\Phi}(fv)}{v}>t \right\}\right)\leq C\int_{\R^n}\Phi(fv)Mu(x)\,dx,\]
\bigskip
where $\Psi(t)=1/\Phi(1/t)$ and $\|f\|_{\widetilde{L^\Psi}}$ denotes the weak Orlicz norm associated to $\Psi$.\\

Let us observe that if $u\in A_1$ then we have that
\bigskip
\[uw\left(\left\{x\in\R^n: \frac{M_{\Phi}(fv)}{v}>t\right\}\right)\leq C\int_{\R^n}\Phi\left(\frac{fv}{t}\right)u(x)\,dx.\]
\bigskip

On the other hand, if
 $\Phi(t)=t$, we get $M_{\Phi}=M$ and $w=v$, and thus we obtain the same estimation given in \cite{O-P}.

%\begin{teo}\label{teo: caso m2}
%Let $u$ be an $A_1$ weight and $v=|x|^\beta$, where $\beta<-n$. Define $w=\frac{1}{\Phi(v^{-1})}$, where $\Phi(t)=t(1+\log^+(t))$. Then for all $t>0$,
%\[uw\left\{x\in\R^n: \frac{M^2(fv)}{v}>t\right\}\leq C\int_{\R^n}\Phi\left(\frac{fv}{t}\right)u(x)\,dx,\]
%where $f$ is bounded and have compact support.
%\end{teo}

%CUBOS DILATADOS
%

\section{Preliminaries and definitions}

 Let us recall that a \emph{weight} $w$  is a locally integrable  function defined on
 $\R^n$, such that $0<w(x)<\infty$ a.e. $x\in \R^n$.
 For $1<p<\infty$ the \emph{Muckenhoupt $A_p$ class} is defined as the set of all weights $w$
 for which there exists a positive constant $C$ such that the inequality
\[\left(\frac{1}{|Q|}\int_Q w\right)\left(\frac{1}{|Q|}\int_Q w^{-\frac{1}{p-1}}\right)^{p-1}
\leq C\] holds for every cube $Q\subset \R^n$,  with sides parallel
to the coordinate axes.  For $p=1$, we say that $w\in A_1$ if there
exists a positive constant $C$ such that
 \begin{equation*}
  \frac{1}{|Q|}\int_Q w\le C\, \inf_Q w(x),
 \end{equation*}
 for every cube $Q\subset \R^n$. %Notice that we can consider cubes
% $Q$ with edges parallel to the coordinate axes, instead of balls $B$.
The smallest constant $C$ for which the Muckenhoupt condition holds
is called the $A_p$-constant of $w$, and denoted by $[w]_{A_p}$.
The $A_\infty$ class is defined by the collection of all the $A_p$ classes.
It is easy to see
that if $p<q$ then $A_p\subseteq A_q$. Given $1<p<\infty$, we use
$p'$ to denote the conjugate exponent $p/(p-1)$.
For $p=1$ we take $p'=\infty$. Some classical references for the basic theory of Muckenhoupt weights
 are for example~\cite{javi} and~\cite{garcia-rubio}.\\

An important property of  Muckenhoupt weights is the \emph{reverse H\"{o}lder's condition}. This means that given
$w\in A_p$, for some $1\leq p<\infty$, there exists a positive constant $C$
and $s>1$ that depends only on the dimension $n$, $p$ and
$[w]_{A_p}$, such that for every cube $Q$
\begin{equation*}
\left(\frac{1}{|Q|}\int_Q w^s(x)\,dx\right)^{1/s}\leq
\frac{C}{|Q|}\int_Q w(x)\,dx.
\end{equation*}
We write $w\in
\textrm{RH}_s$ to point out that the inequality above holds,
and we denote by $[w]_{\textrm{RH}_s}$ the smallest constant $C$ for
which this condition holds. A weight $w$ belongs to RH$_\infty$ if
there exists a positive constant $C$ such
that\begin{equation*}
\sup_Q w\leq\frac{C}{|Q|}\int_Q w,
\end{equation*}
for every $Q\subset \R^n$. Let us observe that
$\textrm{RH}_\infty\subseteq \textrm{RH}_s\subseteq \textrm{RH}_q$,
for every $1<q<s$.\\

We shall use the  next result.

\begin{lema}\cite[Lemma 2.4]{CU-M-P}\label{lema_equivalencias}
The following statements hold.
\begin{enumerate}
\item $w\in A_{\infty}$ if and only if $w=w_0w_1$, with $w_0\in A_1$ and $w_1\in \textrm{RH}_{\infty}$.
\item If $w\in A_1$  then $w^{-1}\in\textrm{RH}_{\infty}$.
\item If $u,v\in\textrm{RH}_{\infty}$ then $uv\in\textrm{RH}_{\infty}$.
\end{enumerate}
\end{lema}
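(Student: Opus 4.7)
The lemma has three essentially independent parts, and I would address them in order of difficulty: (2), (1), then (3).

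\textbf{Part (2).} Start from the $A_1$ inequality in the form $\frac{1}{|Q|}\int_Q w\le [w]_{A_1}\inf_Q w=[w]_{A_1}/\sup_Q w^{-1}$. Solving for $\sup_Q w^{-1}$ gives $\sup_Q w^{-1}\le [w]_{A_1}\bigl(\frac{1}{|Q|}\int_Q w\bigr)^{-1}$, and Jensen's inequality for the convex function $t\mapsto 1/t$ produces $\bigl(\frac{1}{|Q|}\int_Q w\bigr)^{-1}\le\frac{1}{|Q|}\int_Q w^{-1}$. Chaining the two bounds yields the $\textrm{RH}_\infty$ condition $\sup_Q w^{-1}\le[w]_{A_1}\cdot\frac{1}{|Q|}\int_Q w^{-1}$, with the same constant.

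\textbf{Part (1).} For the forward implication, $w\in A_\infty$ means $w\in A_p$ for some $p>1$, and Jones factorization writes $w=w_0 s^{1-p}$ with $w_0,s\in A_1$. Then $w=w_0\cdot(s^{-1})^{p-1}$; part (2) gives $s^{-1}\in\textrm{RH}_\infty$, and a brief Jensen argument---using the pointwise bound $s^{-1}\le c\cdot\frac{1}{|Q|}\int_Q s^{-1}$ and splitting into the cases $p-1\ge 1$ and $0<p-1<1$ to compare the power of the average with the average of the power---shows that $\textrm{RH}_\infty$ is closed under positive powers. For the converse, given $w=w_0w_1$ with $w_0\in A_1$ and $w_1\in\textrm{RH}_\infty$, I would verify the $(|E|/|Q|)^\eta$-characterization of $A_\infty$: for $E\subset Q$, the bound $\sup_Q w_1\le c_{w_1}\cdot\frac{1}{|Q|}\int_Q w_1$ gives $w(E)\le c_{w_1}\frac{1}{|Q|}\int_Q w_1\cdot w_0(E)$, and combining with $w_0(E)\le C(|E|/|Q|)^\eta w_0(Q)$ (from $w_0\in A_\infty$) together with the $A_1$ lower bound $w(Q)\ge\inf_Q w_0\cdot\int_Q w_1\ge[w_0]_{A_1}^{-1}\cdot\frac{1}{|Q|}\int_Q w_0\cdot\int_Q w_1$ yields $w(E)/w(Q)\le C'(|E|/|Q|)^\eta$, so $w\in A_\infty$.

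\textbf{Part (3).} The pointwise $\textrm{RH}_\infty$ bounds $u\le c_u\cdot\frac{1}{|Q|}\int_Q u$ and $v\le c_v\cdot\frac{1}{|Q|}\int_Q v$ a.e.\ on $Q$ multiply to give $\sup_Q(uv)\le c_uc_v\bigl(\frac{1}{|Q|}\int_Q u\bigr)\bigl(\frac{1}{|Q|}\int_Q v\bigr)$, so it suffices to dominate the product of averages by $\frac{1}{|Q|}\int_Q uv$ up to a constant. This reverse correlation inequality is the main obstacle: it fails for arbitrary non-negative $u,v$, and its validity here must be extracted from the fact that $\textrm{RH}_\infty$ imposes $w\le c\cdot\frac{1}{|Q'|}\int_{Q'}w$ on \emph{every} cube $Q'$, not just $Q$. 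That global control prevents $u$ and $v$ from concentrating on disjoint regions (which would collapse $\frac{1}{|Q|}\int_Q uv$ while keeping the product of averages large), and pinning down a workable constant is the delicate step of the argument.
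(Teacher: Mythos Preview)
The paper does not supply its own proof of this lemma: it is quoted as \cite[Lemma~2.4]{CU-M-P} and used as a black box, so there is no in-paper argument to compare against. I can only assess your sketch on its own merits.

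Parts~(2) and~(1) are essentially correct. One remark on the forward direction of~(1): your claim that $\textrm{RH}_\infty$ is stable under positive powers is true, but for exponents $0<\alpha<1$ Jensen's inequality points the \emph{wrong} way (concavity gives $\langle w\rangle_Q^\alpha\ge\langle w^\alpha\rangle_Q$), so ``a brief Jensen argument'' does not dispose of that case. The cleanest fix is to sidestep it: since $A_p\subset A_q$ for $q>p$, take $p\ge 2$ in the Jones factorization from the outset, so that the exponent $p-1\ge 1$ and the convex Jensen applies directly.

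Part~(3) has a genuine gap. You correctly reduce the problem to the reverse correlation inequality $\langle u\rangle_Q\,\langle v\rangle_Q\le C\,\langle uv\rangle_Q$ and correctly observe that it fails for generic nonnegative $u,v$; but you stop at ``pinning down a workable constant is the delicate step'' without actually carrying it out. Here is one way to close it. From $u\le c_u\langle u\rangle_Q$ a.e.\ on $Q$ one checks that the set $E=\{x\in Q:u(x)\ge\tfrac12\langle u\rangle_Q\}$ satisfies $|E|\ge|Q|/(2c_u)$: otherwise
\[
u(Q)=\int_E u+\int_{Q\setminus E}u\ \le\ |E|\,c_u\langle u\rangle_Q+|Q|\cdot\tfrac12\langle u\rangle_Q\ <\ \tfrac12 u(Q)+\tfrac12 u(Q),
\]
a contradiction. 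Next, $v\in\textrm{RH}_\infty$ forces $v\in A_\infty$, since $v(F)\le|F|\sup_Q v\le c_v\tfrac{|F|}{|Q|}v(Q)$ for every $F\subset Q$ is precisely the $A_\infty$ comparison with exponent~$1$. The dual $A_\infty$ characterization $|E|/|Q|\le C'\bigl(v(E)/v(Q)\bigr)^{\delta}$ then turns the lower bound on $|E|$ into $v(E)\ge c''\,v(Q)$ with $c''>0$ depending only on $c_u$ and the $A_\infty$ data of $v$. Hence
\[
\int_Q uv\ \ge\ \int_E uv\ \ge\ \tfrac12\langle u\rangle_Q\, v(E)\ \ge\ \tfrac{c''}{2}\,\langle u\rangle_Q\, v(Q)\ =\ \tfrac{c''}{2}\,|Q|\,\langle u\rangle_Q\,\langle v\rangle_Q,
\]
which is exactly the missing inequality and, combined with your first step, finishes~(3).
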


A locally integrable function $f$ is of \emph{bounded mean
oscillation} if there exists a positive constant $C$ such that
\[\frac{1}{|Q|}\int_Q|f(x)-f_Q|\,dx \leq C\]
for every cube $Q\subset \R^n$, where $f_Q$ denotes the average
$|Q|^{-1}\int_Q f(y)\,dy$. In this case we write $f\in \bmo$,
and we consider the norm
\[\normbmo{f}:=\sup_{Q\subset \R^n} \frac{1}{|Q|}\int_Q|f-f_Q|\,dx.\]
In fact,  the function $\normbmo{\cdot}$ is not properly a norm since constant functions
have $\bmo$ norm equal to zero, but it is
 a norm on quotient space of $\bmo$
functions modulo the space of constant functions.
It is well known that every function $f\in \bmo$ satisfies the
John-Nierenberg inequality. More precisely, there exist two positive constants
$C_1$ and $C_2$, depending only on the dimension, such that for any
cube $Q$ in $\R^n$ and any $\lambda>0$ we have
\begin{equation}\label{eq: J-N}
|\{x\in Q: |f(x)-f_Q|>\lambda\}|\leq
C_1|Q|e^{-\frac{C_2\lambda}{\normbmo{f}}}.
\end{equation}
As a consequence of \eqref{eq: J-N} we obtain that for every
$1<p<\infty$, the quantity
\[\|f\|_{\textrm{BMO},p}:=\sup_{Q\subset \R^n}\left( \frac{1}{|Q|}\int_Q|f(x)-f_Q|^p\,dx\right)^{1/p}\]
is a norm on $\bmo$ equivalent to $\normbmo{\cdot}$ (see for example \cite{javi}).\\

We shall also consider the following  version of weighted $\bmo$ space.
Let $w$ be a weight. We say that a locally integrable function $f$
belongs to $\textrm{BMO}_w^*$ if
\[\normbmows{f}:=\sup_{Q\subset \R^n} \frac{1}{w(Q)}\int_Q |f(x)-f_Q|w(x)\,dx <\infty.\]
Note that $f_Q$ is defined as above, that is, $f_Q=|Q|^{-1}\int_Q
f(y)\,dy$, and $w(Q)=\int_Q w(x)\,dx$. We shall prove a relationship between $\bmo$  and
$\textrm{BMO}_w^*$ for  $w\in A_1$ in Lemma~\ref{lema1.1}.\\
%%%%%%%%%%%%%%%%%%%%%%%%%%%%%%%%%%%%%%%%%%%%%%%%%%%

We say that $\Phi:[0,\infty)\to [0,\infty]$ is a \emph{Young function} if it is strictly increasing, convex,  $\Phi(0)=0$ and $\Phi(t)\to\infty$ when $t\to\infty$. Given a Young function $\Phi$
 and a Muckenhoupt weight $w$, the \emph{generalized maximal
operator} $M_{\Phi,w}=M_{\Phi(L),w}$ is defined by
\[M_{\Phi,w} f(x):=\sup_{Q\ni x}\|f\|_{\Phi,Q,w},\]
where $\|f\|_{\Phi,Q,w}$ denotes the weighted $\Phi$-average over $Q$ defined by
means of the Luxemburg norm
\begin{equation}\label{eq: luxem norm}
\|f\|_{\Phi,Q,w}:=\inf\left\{\lambda>0:
\frac{1}{w(Q)}\int_Q\Phi\left(\frac{|f|}{\lambda}\right)w\,dx\leq
1\right\}.
\end{equation}
It can be proved that
\begin{equation}\label{eq: la norma lo hace}
\frac{1}{w(Q)}\int_Q\Phi\left(\frac{|f|}{\|f\|_{\Phi,Q,w}}\right)w\,dx\leq
1.
\end{equation}

By following the same arguments as in the result of Krasnosel'ski{\u\i} and Ruticki{\u\i} (\cite{KR}, see also \cite{raoren}), since  $d\mu(x)=w(x)\,dx$ is a doubling measure for $w\in A_\infty$, we can get that $\|f\|_{\Phi,Q,w}$ is equivalent to the following quantity

\[\inf_{\tau>0}\left\{\tau+\frac{\tau}{w(Q)}\int_{Q}\Phi\left(\frac{|f|}{\tau}\right)w\,dx\right\}.\]

\medskip

If $w=1$ we simply write $M_{\Phi}$ and $\|f\|_{\Phi,Q}$.
For example, when $\Phi(t)=t$, $M_\Phi$ is the Hardy-Littlewood
maximal operator $M$. The function $\Phi(t)=t(1+\log^{+}{t})^m$,
$m\in \N$, plays an important role in the estimations for
commutators of singular integrals. In this case, the corresponding
maximal function is denoted by  $M_{L(\log L)^m}$, which satisfies
\begin{equation}\label{eq: maximal equiv a M}
M_{L(\log L)^m}f(x)\approx M^{m+1}f(x),
\end{equation}
where $M^{m+1}$ denotes the composition of the maximal operator $m+1$ times
 with itself (see \cite{Carlos-95} and \cite{B-H-P}).\\

The next result is a well known fact about a relation between the $\normexpl{b-b_Q}$ and $\normbmo{b}$ when $b$ is a BMO function and its proof can be found in \cite{Carlos_endpoint}, where  $\normexpl{\cdot}$ denotes the $\Phi$-average over $Q$ when  $\Phi(t)=e^t-1$.

\begin{lema}\label{lema: perez}
Given  $f\in\bmo$, there exists a positive constant $C$ such that
\[\normexpl{f-f_Q}\leq C\normbmo{f}.\]
\end{lema}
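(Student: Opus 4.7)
The plan is to unravel the Luxemburg norm directly and feed the John--Nirenberg inequality \eqref{eq: J-N} into the resulting distributional integral. By the definition \eqref{eq: luxem norm} with $w=1$ and $\Phi(t)=e^{t}-1$, it suffices to exhibit a constant $\mu$, of size comparable to $\|f\|_{\bmo}$, for which
\[
\frac{1}{|Q|}\int_{Q}\bigl(e^{|f(x)-f_{Q}|/\mu}-1\bigr)\,dx\leq 1.
\]
I would prove this inequality for $\mu=C\normbmo{f}$ with $C$ chosen at the end, and then the conclusion $\normexpl{f-f_Q}\leq C\normbmo{f}$ follows from the definition of the Luxemburg infimum.

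First I would rewrite the left-hand side using the layer-cake formula. Writing $g=f-f_Q$ and applying Fubini, one obtains
\[
\int_{Q}\bigl(e^{|g|/\mu}-1\bigr)\,dx=\frac{1}{\mu}\int_{0}^{\infty}e^{t/\mu}\,\bigl|\{x\in Q:|g(x)|>t\}\bigr|\,dt.
\]
Now I would substitute the John--Nirenberg bound $|\{x\in Q:|g(x)|>t\}|\leq C_{1}|Q|e^{-C_{2}t/\normbmo{f}}$, which converts the right-hand side into an explicit exponential integral in $t$. This integral converges precisely when $1/\mu<C_{2}/\normbmo{f}$, and in that case evaluates to $C_{1}|Q|\bigl(C_{2}\mu/\normbmo{f}-1\bigr)^{-1}$.

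Dividing by $|Q|$, the desired inequality reduces to
\[
\frac{C_{1}\normbmo{f}}{C_{2}\mu-\normbmo{f}}\leq 1,
\]
which is equivalent to $\mu\geq(C_{1}+1)\normbmo{f}/C_{2}$. Taking $C=(C_{1}+1)/C_{2}$ and $\mu=C\normbmo{f}$ therefore finishes the argument.

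The only delicate point is respecting the admissible range of the scaling parameter: the exponential integral only converges for $\mu$ strictly larger than $\normbmo{f}/C_{2}$, so the constant $C$ must beat both the convergence threshold and the normalization constant $C_1$ coming from John--Nirenberg. Once one sees that a single choice $\mu\sim\normbmo{f}$ handles both requirements, the proof is immediate; no properties of $f$ beyond \eqref{eq: J-N} are needed, which is why the statement is valid for every BMO function with a dimensional constant.
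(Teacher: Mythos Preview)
Your argument is correct: the layer-cake computation together with the John--Nirenberg inequality \eqref{eq: J-N} is exactly the standard way to obtain this bound, and your bookkeeping of the constants is accurate. The paper does not supply its own proof of this lemma but simply cites \cite{Carlos_endpoint}; the proof there is precisely the one you have written, so your proposal matches the intended argument.
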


\medskip

Given a Young function $\Phi$,
 we use $\bar{\Phi}$ to denote the
\emph{complementary Young function} associated to $\Phi$, defined
for $t\geq 0$ by
\[\bar{\Phi}(t)=\sup\{ts-\Phi(s):s\geq 0\}.\]
 It is well known that $\bar{\Phi}$ satisfies
\[t\leq \Phi^{-1}(t)\bar{\Phi}^{-1}(t)\leq 2t,\quad\forall t>0.\]

When $\Phi(t)=t(1+\log^{+}{t})^\alpha$, $\alpha>0$ we have that
$\bar{\Phi}(t)\approx\exp(t^{1/\alpha})-1$, with the corresponding
maximal function denoted by $M_{\exp L^{1/\alpha}}$. %, since
%\begin{equation}\label{normas equiv}
%\|f\|_{\bar{\phi},Q,w}\approx \|f\|_{\psi,Q,w},
%\end{equation}
%for every function $f$, every weight $w$ and every $Q$, where $\psi(t)=e^{1/\alpha}$ (justificar esto).

\medskip

The following
\emph{generalized H\"{o}lder inequality}
\begin{equation*}
\frac{1}{w(Q)}\int_Q |fg| w\,dx\leq 2 \|f\|_{\Phi,Q,w}\|g\|_{\bar{\Phi},Q,w}
\end{equation*}
holds.\\

%The following result will be a fundamental tool in the proof of our main result.

Given weights $u$ and $v$, by $v \in  A_p(u)$ we mean that $v$ satisfies the $A_p$ condition
with respect to the measure $\mu$ defined as $d\mu=u dx$. More precisely, for $1<p< \infty$, we say that
$v\in A_p(u)$ if there exists a positive constant $C$ such that
\[\left(\frac{1}{u(Q)}\int_Q v(x)u(x)\,dx \right)\left(\frac{1}{u(Q)}\int_Q v(x)^{-\frac{1}{p-1}}u(x)\,dx\right)^{p-1}\leq C,\]
for every cube $Q\subset\R^n$. A weight $v$ belongs to $A_1(u)$ if
\[\frac{1}{u(Q)}\int_Q v(x)u(x)\,dx\le C\, \inf_Q v(x).\]
We denote the union of all the $A_p(u)$ classes by $A_\infty(u)$.
We shall use the following result.

\begin{lema}\cite[Lemma 2.1]{CU-M-P}\label{lema1.6}
If $u\in A_1$ and $v\in A_{\infty}(u)$, then $uv\in A_{\infty}$. Particularly,
if  $v\in A_p(u)$ with $1\leq p<\infty$, then $uv\in
A_p$.
\end{lema}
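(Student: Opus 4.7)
The plan is to prove the particular statement first and deduce the $A_\infty$ claim from it: since $v\in A_\infty(u)$ means $v\in A_q(u)$ for some $q\ge 1$, once we know $uv\in A_q$ we are done. Hence the substance is the implication that $u\in A_1$ and $v\in A_p(u)$ force $uv\in A_p$ for every $1\le p<\infty$.

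For $p=1$, I would argue directly from the definitions. On a cube $Q$, the identity
\[\frac{1}{|Q|}\int_Q uv\,dx=\frac{u(Q)}{|Q|}\cdot\frac{1}{u(Q)}\int_Q v\,u\,dx\]
allows the $A_1(u)$ hypothesis on $v$ to bound the second factor by $C\inf_Q v$ and the $A_1$ hypothesis on $u$ to bound the first by $C\inf_Q u$. The trivial pointwise estimate $(\inf_Q u)(\inf_Q v)\le \inf_Q(uv)$ then yields the $A_1$ condition for $uv$.

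For $p>1$, the plan is to rewrite both factors of the $A_p$ product for $uv$ as $u$-weighted averages, since this is the shape in which the $A_p(u)$ condition on $v$ is available. The first factor is the same as above. For the second, I would exploit the algebraic identity $u^{-1/(p-1)}=u\cdot u^{-p/(p-1)}$ together with the pointwise bound $u(x)\ge \inf_Q u$ to obtain
\[\frac{1}{|Q|}\int_Q(uv)^{-\frac{1}{p-1}}\,dx\le (\inf_Q u)^{-\frac{p}{p-1}}\cdot\frac{1}{|Q|}\int_Q v^{-\frac{1}{p-1}}u\,dx,\]
and then replace $\inf_Q u$ by $u(Q)/|Q|$ up to the $A_1$ constant of $u$. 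After raising this to the power $p-1$ and multiplying it with the first factor, the powers of $u(Q)/|Q|$ cancel exactly and the resulting upper bound is a constant multiple of the $A_p(u)$ product for $v$, giving $[uv]_{A_p}\le C[u]_{A_1}^{p}[v]_{A_p(u)}$.

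The only real obstacle is the exponent bookkeeping, ensuring that the conversions between $|Q|^{-1}\int$ and $u(Q)^{-1}\int$ line up so that the factors $u(Q)/|Q|$ cancel. No deeper Muckenhoupt machinery (reverse Hölder, $A_\infty$-openness, or the equivalences in Lemma~\ref{lema_equivalencias}) is required; the two defining inequalities and the pointwise infimum bound on $u$ are enough.
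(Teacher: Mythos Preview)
Your argument is correct. The paper itself does not supply a proof of this lemma; it is quoted from \cite[Lemma 2.1]{CU-M-P}, so there is no in-paper argument to compare against. Your direct computation---rewriting the Lebesgue averages as $u$-weighted averages via the factor $u(Q)/|Q|$, using $u\in A_1$ to control $\inf_Q u$, and observing that the resulting powers of $u(Q)/|Q|$ cancel so that only the $A_p(u)$ product for $v$ remains---is exactly the standard route and yields the clean quantitative bound $[uv]_{A_p}\le [u]_{A_1}^{p}[v]_{A_p(u)}$.
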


Finally, we shall state a result concerning to a Coifman type inequality for commutators of Calder\'on-Zygmund operators, which  is proved in \cite{Carlos-sharp}.
\begin{lema}\label{lema: tipo fuerte Tbk}
Let $0<p<\infty$, $w\in A_{\infty}$ and $b\in \mathrm{BMO}$. Then there exists a positive constant $C$ such that
\[\int_{\R^n}|T_b^kf(x)|^pw(x)\,dx\leq C\normbmo{b}^{kp}[w]_{A_{\infty}}^{(k+1)p}\int_{\R^n} M^{k+1}f(x)^pw(x)\,dx.\]
\end{lema}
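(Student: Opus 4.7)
The plan is to proceed by induction on $k$, using the sharp maximal function $M^{\sharp}$ together with a Fefferman--Stein type inequality adapted to $A_\infty$ weights. The base case $k=0$ reduces to the classical Coifman--Fefferman inequality for Calder\'on--Zygmund operators,
\[\int_{\R^n}|Tf|^pw\,dx\leq C[w]_{A_\infty}^p\int_{\R^n}(Mf)^pw\,dx,\]
which is obtained by combining the good-$\lambda$ estimate $|\{M_\delta(Tf)>2\lambda,\;Mf\leq\gamma\lambda\}|\leq C\gamma^{\e}|\{M_\delta(Tf)>\lambda\}|$ (for $0<\delta<1$) with the equivalent formulation of $A_\infty$ via the Fujii--Wilson constant.

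For the inductive step, the engine of the proof is the pointwise control of the sharp maximal function of $T_b^k f$ due to P\'erez: fix $0<\delta<\e<1$; then for every locally integrable $f$,
\[M^{\sharp}_\delta(T_b^k f)(x)\leq C\normbmo{b}^{k}\,M_{L(\log L)^k}f(x)+C\sum_{j=0}^{k-1}\binom{k}{j}\normbmo{b}^{k-j}\,M_\e(T_b^j f)(x).\]
The derivation of this pointwise bound is the main technical step: one fixes a cube $Q\ni x$, writes $(b(y)-b(z))^k=\sum_j\binom{k}{j}(b(y)-b_Q)^{k-j}(b_Q-b(z))^j$ with $b_Q$ the average of $b$ over a suitable dilate of $Q$, and splits $f=f\chi_{2Q}+f\chi_{(2Q)^c}$. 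The local piece is handled by the $L^1\!\to\! L^{1,\infty}$ boundedness of $T$ (via Kolmogorov's inequality to absorb the $\delta$-power) and the generalized H\"older inequality $\|(b-b_Q)^{k-j}f\|_{1,2Q}\lesssim\normbmo{b}^{k-j}\|f\|_{L(\log L)^{k-j},2Q}$ from Lemma~\ref{lema: perez}. The far piece is controlled using the smoothness condition \eqref{eq:prop del nucleo} together with the same John--Nirenberg/Luxemburg estimates. This is where the $\binom{k}{j}$-weighted sum of iterated commutators of lower order appears.

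Once this pointwise inequality is available, I would invoke the Fefferman--Stein inequality for $A_\infty$ weights,
\[\int_{\R^n}(M_\delta g)^p w\,dx\leq C\,[w]_{A_\infty}^{p}\int_{\R^n}(M^{\sharp}_\delta g)^p w\,dx,\]
valid whenever the left-hand side is finite and $0<p<\infty$ (the standard justification passes through a truncation argument to guarantee finiteness; for our lemma one may assume $f\in L^\infty_c$ first and extend by density). Applying this with $g=T_b^k f$ and raising both sides to the $p$-th power yields
\[\int|T_b^k f|^pw\,dx\leq C[w]_{A_\infty}^p\normbmo{b}^{kp}\int(M_{L(\log L)^k}f)^pw\,dx+C[w]_{A_\infty}^p\sum_{j=0}^{k-1}\normbmo{b}^{(k-j)p}\int(M_\e T_b^j f)^pw\,dx,\]
where, for $\e$ small enough, $M_\e T_b^j f\lesssim M(T_b^jf)$ pointwise (after noting $M_\e\le M_\delta$ if we keep $\e<\delta$, or more directly applying H\"older) so that the inductive hypothesis bounds each term by $C[w]_{A_\infty}^{(j+1)p}\normbmo{b}^{jp}\int(M^{j+1}f)^pw\,dx$. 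Summing and using $M^{j+1}f\leq M^{k+1}f$ for $j\le k$ together with the equivalence $M_{L(\log L)^k}\approx M^{k+1}$ from \eqref{eq: maximal equiv a M} yields the claimed bound with the compounded constant $C\normbmo{b}^{kp}[w]_{A_\infty}^{(k+1)p}$, since each step of the induction contributes exactly one extra factor of $[w]_{A_\infty}^{p}$. The main obstacle is the derivation of the sharp maximal pointwise estimate: correctly tracking the combinatorial expansion of $(b(y)-b(z))^k$ and ensuring that the off-diagonal cancellations in the kernel smoothness condition survive the Luxemburg averaging with the log-weights is the delicate part; once that inequality is in place, the induction is purely mechanical.
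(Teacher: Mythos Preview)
The paper does not prove this lemma; it simply cites P\'erez \cite{Carlos-sharp}, so there is no in-paper argument to compare against. Your sketch is essentially the argument of that reference: the pointwise sharp-maximal bound for $M^\sharp_\de(T_b^k f)$ combined with the weighted Fefferman--Stein inequality, iterated down to $k=0$.

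There is, however, a gap in how you close the induction. After Fefferman--Stein you are left with $\int(M_\e T_b^j f)^p w\,dx$ for $j<k$, whereas the inductive hypothesis only controls $\int|T_b^j f|^p w\,dx$. Your bridge $M_\e g\le Mg$ does not help: $M$ is \emph{not} bounded on $L^p(w)$ for arbitrary $0<p<\infty$ and $w\in A_\infty$ (take $w\equiv 1$ and $p\le 1$, or any $w\in A_\infty\setminus A_p$). One could instead pick $\e$ so small that $w\in A_{p/\e}$, which does make $M_\e$ bounded on $L^p(w)$; but then the operator norm depends on $[w]_{A_{p/\e}}$ rather than $[w]_{A_\infty}$, and the clean exponent $(k+1)p$ in the statement is no longer transparent. (Your parenthetical ``$M_\e\le M_\de$ if we keep $\e<\de$'' also contradicts the earlier choice $\de<\e$ needed for the pointwise estimate.) The route that actually produces the stated constant is to reapply Fefferman--Stein directly to $M_\e(T_b^j f)$, obtaining
\[
\|M_\e(T_b^j f)\|_{L^p(w)}\le C\,[w]_{A_\infty}\,\|M^\sharp_\e(T_b^j f)\|_{L^p(w)},
\]
and then invoke the pointwise sharp-maximal estimate at level $j$ with fresh parameters $\e<\e'<1$. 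Unfolding this recursion down to level $0$ uses Fefferman--Stein exactly $k+1$ times, which is precisely what yields the factor $[w]_{A_\infty}^{(k+1)p}$. Equivalently, the induction should be formulated on $\|M_\de(T_b^j f)\|_{L^p(w)}$ rather than on $\|T_b^j f\|_{L^p(w)}$.
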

Note that when $w\in A_p$,  by applying $k + 1$ times Muckenhoupt's Theorem   we obtain the well known fact that the higher order commutators
are bounded on $L^p(w)$.

%
%A  \emph{Calder\'on-Zygmund singular integral} is a Calder\'on-Zygmund operator that satisfies
%\[Tf(x)=\lim_{\varepsilon\to 0} \int_{|x-y|>\varepsilon} K(x,y)f(y)\,dy.\]

\section{Auxiliary Lemmas}
In this section we prove four lemmas that we shall use in the proof of our results.
The first one states that the spaces $\bmo$ and
$\textrm{BMO}_w^*$  coincide when $w\in A_1$.

\begin{lema}\label{lema1.1}
Let $w\in A_1$. Then  $\normbmo{f}$ and $\normbmows{f}$ are
equivalent.
\end{lema}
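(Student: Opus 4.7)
The plan is to prove the two bounds $\|f\|_{\mathrm{BMO}_w^*} \lesssim \|f\|_{\mathrm{BMO}}$ and $\|f\|_{\mathrm{BMO}} \lesssim \|f\|_{\mathrm{BMO}_w^*}$ separately. The key observation is that the two norms use the same oscillation $|f - f_Q|$ (both with the unweighted average $f_Q$), only the \emph{integration measure} differs. So the task reduces to comparing $w\,dx$ and $dx$ over each cube $Q$, and here $A_1$ will supply exactly what we need in both directions.

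For the easier direction, $\|f\|_{\mathrm{BMO}} \lesssim \|f\|_{\mathrm{BMO}_w^*}$, I would use the $A_1$ condition directly. Since $w \in A_1$, we have $w(Q)/|Q| \leq C\,\mathrm{ess\,inf}_Q w$, so $1/w(x) \leq C|Q|/w(Q)$ for a.e.\ $x \in Q$. Writing
\[
\frac{1}{|Q|}\int_Q |f - f_Q|\,dx = \frac{1}{|Q|}\int_Q |f - f_Q|\frac{w(x)}{w(x)}\,dx \leq \frac{C}{w(Q)}\int_Q |f - f_Q|\,w(x)\,dx,
\]
and taking the supremum over $Q$ gives $\|f\|_{\mathrm{BMO}} \leq C\|f\|_{\mathrm{BMO}_w^*}$.

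For the reverse inequality, I would exploit that $A_1 \subset A_\infty$ implies $w$ satisfies a reverse H\"older condition $w \in \mathrm{RH}_s$ for some $s > 1$. Applying the classical H\"older inequality with exponents $s'$ and $s$ on $Q$ (with respect to Lebesgue measure) yields
\[
\frac{1}{w(Q)}\int_Q |f - f_Q|\,w\,dx \leq \frac{|Q|}{w(Q)}\left(\frac{1}{|Q|}\int_Q |f - f_Q|^{s'}\,dx\right)^{1/s'}\left(\frac{1}{|Q|}\int_Q w^s\,dx\right)^{1/s}.
\]
The reverse H\"older property bounds the last factor by $C\,w(Q)/|Q|$, cancelling the prefactor $|Q|/w(Q)$. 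For the middle factor, I would invoke the $L^p$-equivalent BMO norm $\|f\|_{\mathrm{BMO},s'}$ (a consequence of the John--Nirenberg inequality~\eqref{eq: J-N} recalled in the preliminaries) to bound it by $C\|f\|_{\mathrm{BMO}}$. Taking the supremum over $Q$ then gives $\|f\|_{\mathrm{BMO}_w^*} \leq C\|f\|_{\mathrm{BMO}}$.

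There is no serious obstacle here; the only subtle choice is which direction uses which aspect of the $A_1$ hypothesis. The ``$\lesssim\|f\|_{\mathrm{BMO}_w^*}$'' bound uses the pointwise $A_1$ condition $w(Q)/|Q| \lesssim \mathrm{ess\,inf}_Q w$ (equivalently, control of $1/w$), while the ``$\lesssim \|f\|_{\mathrm{BMO}}$'' bound uses that $A_1 \subset A_\infty$ gives reverse H\"older, combined with the $L^{s'}$-John--Nirenberg equivalence of BMO norms. Both implicit constants depend on the dimension, on $[w]_{A_1}$, and (through $s$ and $[w]_{\mathrm{RH}_s}$) on the reverse H\"older exponent.
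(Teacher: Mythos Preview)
Your proposal is correct and follows essentially the same approach as the paper's proof: the direction $\normbmo{f}\lesssim\normbmows{f}$ uses the pointwise $A_1$ bound $w(Q)/|Q|\le[w]_{A_1}\inf_Q w$, and the reverse direction uses H\"older with exponents $s,s'$ together with $w\in\mathrm{RH}_s$ and the John--Nirenberg equivalence of $\normbmo{\cdot}$ and $\|\cdot\|_{\mathrm{BMO},s'}$. The arguments are identical up to cosmetic differences in how the $A_1$ condition is inserted in the first step.
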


\begin{proof}
Since $w$ belongs to  $A_1$, we have that
\begin{eqnarray*}
\frac{1}{|Q|}\int_Q |f(x)-f_Q|\,dx&= & \frac{1}{w(Q)}\frac{w(Q)}{|Q|}\int_Q |f(x)-f_Q|\,dx\\
&\leq&\frac{[w]_{A_1}}{w(Q)}\int_Q |f(x)-f_Q|w(x)\,dx\\
&\leq& [w]_{A_1}\normbmows{f}.
\end{eqnarray*}
On the other hand,   $w\in A_1$ implies that  there exists $s>1$
such that $w\in \textrm{RH}_s$. Then, from  H\"{o}lder's
inequality we obtain that
\begin{eqnarray*}
\frac{1}{w(Q)}\int_Q |f(x)-f_Q|w(x)\,dx&\leq & \frac{|Q|}{w(Q)}\left(\frac{1}{|Q|}\int_Q |f(x)-f_Q|^{s'}\right)^{1/s'}\left(\frac{1}{|Q|}\int_Q w^s\right)^{1/s}\\
&\leq&C[w]_{\textrm{RH}_s}\normbmo{f}\frac{|Q|}{w(Q)} \frac{1}{|Q|}\int_Q w(x)\,dx\\
&= & C[w]_{\textrm{RH}_s}\normbmo{f}.
\end{eqnarray*}
\end{proof}
%%%%%%%%%%%%%%%%%%%%%%%%%%%%%%%%%%%%%%%%%%%%%%%%%%%%%%

The next lemma gives us a way to deal with the weighted Orlicz norms, controlling them by the same non-weighted norms.

\begin{lema}\label{lema: con pesos menor que sin pesos}
Let $w$ be a weight such that $w\in\mathrm{RH}_s$ for some $s>1$. Then
\[\normexplw{f}\leq 2^{1/s'} [w]_{\mathrm{RH}_s}s'\normexpl{f}.\]
\end{lema}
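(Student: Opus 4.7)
The plan is to reduce the statement to showing that, if $\lambda=\normexpl{f}$ and $\mu=2^{1/s'}[w]_{\mathrm{RH}_s}s'\lambda$, then the Luxemburg condition holds with respect to the weighted average, namely
\[
\frac{1}{w(Q)}\int_Q\bigl(e^{|f|/\mu}-1\bigr)w\,dx\leq 1,
\]
since by definition of $\normexplw{\cdot}$ this yields exactly $\normexplw{f}\leq\mu$.

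My first move would be a convexity rescaling. Setting $c=2^{1/s'}[w]_{\mathrm{RH}_s}\geq 1$ and using that $\Phi(t)=e^t-1$ is convex with $\Phi(0)=0$, one gets $\Phi(t/c)\leq\Phi(t)/c$, so pointwise
\[
e^{|f|/\mu}-1\leq\frac{1}{c}\bigl(e^{|f|/(s'\lambda)}-1\bigr).
\]
This absorbs the factor $2^{1/s'}[w]_{\mathrm{RH}_s}$ and leaves me having to estimate an integral involving $e^{|f|/(s'\lambda)}-1$.

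Next I would apply the classical H\"older inequality with exponents $s$ and $s'$ to separate $f$ from $w$:
\[
\int_Q\bigl(e^{|f|/(s'\lambda)}-1\bigr)w\,dx\leq\left(\int_Q\bigl(e^{|f|/(s'\lambda)}-1\bigr)^{s'}\,dx\right)^{1/s'}\!\left(\int_Q w^s\,dx\right)^{1/s}.
\]
The $w$-factor is handled directly by the reverse H\"older hypothesis $w\in\mathrm{RH}_s$, giving $\bigl(\int_Q w^s\bigr)^{1/s}\leq [w]_{\mathrm{RH}_s}w(Q)/|Q|^{1/s'}$. For the $f$-factor I would use the elementary pointwise bound $(e^x-1)^{s'}\leq e^{s'x}$ (for $x\geq 0$, $s'\geq 1$), so that the $s'$-th power cancels with the $1/s'$ in the exponent of the Luxemburg scaling:
\[
\int_Q\bigl(e^{|f|/(s'\lambda)}-1\bigr)^{s'}\,dx\leq\int_Q e^{|f|/\lambda}\,dx=\int_Q\bigl(e^{|f|/\lambda}-1\bigr)\,dx+|Q|\leq 2|Q|,
\]
where the last step uses the definition of $\lambda=\normexpl{f}$ via \eqref{eq: la norma lo hace}. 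This accounts for the exponent $2^{1/s'}$ appearing in the final constant.

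Putting the three estimates together gives
\[
\frac{1}{w(Q)}\int_Q\bigl(e^{|f|/\mu}-1\bigr)w\,dx\leq\frac{1}{c}\cdot\frac{2^{1/s'}|Q|^{1/s'}\,[w]_{\mathrm{RH}_s}\,w(Q)/|Q|^{1/s'}}{w(Q)}=\frac{2^{1/s'}[w]_{\mathrm{RH}_s}}{c}=1,
\]
as wanted. The only delicate point is really bookkeeping of constants, matching the exponent from the $(e^x-1)^{s'}\leq e^{s'x}$ step with the exponent $1/s'$ produced by H\"older, and choosing the rescaling factor $c$ to exactly absorb $2^{1/s'}[w]_{\mathrm{RH}_s}$; there is no deep obstacle beyond keeping these factors aligned.
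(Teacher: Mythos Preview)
Your proof is correct and follows essentially the same approach as the paper's: both arguments combine the convexity bound $\Phi(t/c)\leq\Phi(t)/c$ for $c\geq 1$, H\"older's inequality with exponents $s,s'$, the reverse H\"older hypothesis, and the Luxemburg normalization \eqref{eq: la norma lo hace}. The only cosmetic difference is the order of operations---you perform the convexity rescaling first and then apply H\"older to $e^{|f|/(s'\lambda)}-1$, whereas the paper first drops the $-1$, applies H\"older to $e^{|f|/(s'\lambda)}$, and invokes convexity at the end---but the ingredients and the resulting constant are identical.
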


\begin{proof}
Fix  $\lambda=s'\normexpl{f}$. In order to show that
$\normexplw{f}\leq 2^{1/s'}[w]_{\mathrm{RH}_s} \lambda$,
 it is enough to prove that
\begin{equation}\label{eq: con pesos menor que sin pesos}
\frac{1}{w(Q)}\int_Q \left(e^{\frac{|f(x)|}{\lambda}}-1\right)w(x)\,dx \leq 2^{1/s'}[w]_{\mathrm{RH}_s},
\end{equation}
for every cube $Q$. Indeed, since $C=2^{1/s'}[w]_{\mathrm{RH}_s}>1$, from \eqref{eq: con pesos menor que sin pesos}  we obtain that
\[\frac{1}{w(Q)}\int_Q \left(e^{\frac{|f(x)|}{C\lambda}}-1\right)w(x)\,dx\leq
\frac{1}{Cw(Q)}\int_Q \left(e^{\frac{|f(x)|}{\lambda}}-1\right)w(x)\,dx \leq 1,\]
where we have used that $\psi(\alpha t)\leq \alpha \psi(t)$, for every convex function $\psi$ with $\psi(0)=0$ and every $\alpha\in[0,1]$.
Then we conclude that $\normexplw{f}\leq C\lambda$.

Then, let us prove that \eqref{eq: con pesos menor que sin pesos} holds.
From  H\"{o}lder's inequality and the reverse H\"{o}lder condition $\mathrm{RH}_s$, we get that
\begin{eqnarray*}
\frac{1}{w(Q)}\int_Q \left(e^{\frac{|f(x)|}{\lambda}}-1\right)w(x)\,dx &\leq& \frac{1}{w(Q)}\int_Q e^{\frac{|f(x)|}{\lambda}}w(x)\,dx\\
&\leq&\frac{|Q|}{w(Q)}\left(\frac{1}{|Q|}\int_Q e^{\frac{|f|}{\normexpl{f}}}dx\right)^{1/s'}\left(\frac{1}{|Q|}\int_Q w^s\,dx\right)^{1/s}\\
&\leq&[w]_{\textrm{RH}_s}\left(\frac{1}{|Q|}\int_Q e^{\frac{|f|}{\normexpl{f}}}\,dx\right)^{1/s'}\\
&=&[w]_{\textrm{RH}_s}\left(\frac{1}{|Q|}\int_Q \left(e^{\frac{|f|}{\normexpl{f}}}-1\right)\,dx+1 \right)^{1/s'}\\
&\leq& 2^{1/s'}[w]_{\textrm{RH}_s},
\end{eqnarray*}
where in the last inequality we have used \eqref{eq: la norma lo hace}. We are done.
\end{proof}

The following result is useful in order to prove our main result.
\begin{lema}\label{lema:  promedios dilatados y BMO}
Given  $f\in\bmo$, there exists a positive constant $C$ such that
\[|f_Q-f_{2^kQ}|\leq C k\normbmo{f},\]
for every $k\in\N$ and every cube $Q$.
\end{lema}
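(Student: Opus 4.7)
The plan is to exploit a telescoping decomposition along the dyadic chain of cubes $Q \subset 2Q \subset 2^2 Q \subset \cdots \subset 2^k Q$, and bound each consecutive difference of averages by a universal multiple of $\normbmo{f}$.

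First I would write
\[
f_Q - f_{2^k Q} = \sum_{j=0}^{k-1} \bigl( f_{2^j Q} - f_{2^{j+1} Q} \bigr),
\]
so that by the triangle inequality the problem reduces to controlling each term $|f_{2^j Q} - f_{2^{j+1} Q}|$ by a constant depending only on the dimension times $\normbmo{f}$.

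For a fixed $j$, I would estimate
\[
|f_{2^j Q} - f_{2^{j+1} Q}|
= \left| \frac{1}{|2^j Q|} \int_{2^j Q} \bigl( f(x) - f_{2^{j+1} Q} \bigr) \, dx \right|
\leq \frac{1}{|2^j Q|} \int_{2^j Q} |f(x) - f_{2^{j+1} Q}| \, dx,
\]
and then enlarge the domain of integration from $2^j Q$ to $2^{j+1} Q$, paying the price $|2^{j+1} Q|/|2^j Q| = 2^n$:
\[
\frac{1}{|2^j Q|} \int_{2^j Q} |f(x) - f_{2^{j+1} Q}| \, dx
\leq \frac{2^n}{|2^{j+1} Q|} \int_{2^{j+1} Q} |f(x) - f_{2^{j+1} Q}| \, dx
\leq 2^n \normbmo{f}.
\]

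Summing over $j = 0, 1, \ldots, k-1$ yields $|f_Q - f_{2^k Q}| \leq 2^n k \normbmo{f}$, which is the desired inequality with $C = 2^n$. There is no real obstacle here; the only point worth noting is the need to compare an average on $2^j Q$ to the average $f_{2^{j+1} Q}$, which is handled by enlarging the region of integration at the cost of the dimensional factor $2^n$. The argument is entirely elementary and the estimate is sharp up to the dimensional constant.
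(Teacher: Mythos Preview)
Your proof is correct and follows essentially the same approach as the paper: telescope along the chain $Q\subset 2Q\subset\cdots\subset 2^kQ$, bound each consecutive difference by enlarging the domain of integration to the larger cube at the cost of the dimensional factor $2^n$, and sum. The only cosmetic difference is that you track the constant explicitly as $2^n$ while the paper leaves it as a generic $C$.
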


\begin{proof}
Fix a cube $Q$ and a  positive integer $k$. Then
\begin{align*}
|f_Q-f_{2^kQ}|&\leq \sum_{j=0}^{k-1} |f_{2^{j+1}Q}-f_{2^jQ}|\\
&\leq \sum_{j=0}^{k-1} \frac{1}{|2^jQ|}\int_{2^jQ}|f(x)-f_{2^{j+1}Q}|\,dx\\
&\leq C\sum_{j=0}^{k-1} \frac{1}{|2^{j+1}Q|}\int_{2^{j+1}Q}|f(x)-f_{2^{j+1}Q}|\,dx\\
&\leq C k \normbmo{f}.
\end{align*}
\end{proof}

%\begin{lema}\cite[Thm. 5.1]{CU-Neu}  \label{lema_factorizacion}
%A weight  $w$ belongs to $\textrm{RH}_s\cap A_p$, with $1\leq p<\infty$ and
%$1<s\leq \infty$, if and only if there exist weights  $w_0,w_1$ such that
%$w_0\in A_1\cap \textrm{RH}_s$, $w_1\in A_p\cap\textrm{RH}_{\infty}$
%and $w=w_0w_1$.
%\end{lema}

For the proof of Theorem~\ref{teo: main} we shall use the following lemma which will be a fundamental tool. It
states that if $u\in A_1$ and $uv\in A_{\infty}$, then $u\in A_1(v)$.

\begin{lema}\label{lema: fundamental}
Let $u$ and $v$ be weights such that $u\in A_1$ y $uv\in
A_{\infty}$. Then there exists a positive constant $C$ such that
\begin{equation}\label{eq: fundamental}
\frac{(uv)(Q)}{v(Q)}\leq C\inf_Q u,
\end{equation}
for every cube $Q$.
\end{lema}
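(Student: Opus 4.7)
The plan is to exploit the Jones-type factorization from Lemma~\ref{lema_equivalencias} to decompose $uv$ and $v$ simultaneously, so that the desired estimate reduces to a direct application of the $A_1$ condition on $u$.

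First, since $uv\in A_\infty$, Lemma~\ref{lema_equivalencias}(1) provides a factorization $uv=w_0w_1$ with $w_0\in A_1$ and $w_1\in\mathrm{RH}_\infty$. The hypothesis $u\in A_1$ gives $u^{-1}\in\mathrm{RH}_\infty$ by Lemma~\ref{lema_equivalencias}(2), and then $w_1\,u^{-1}\in\mathrm{RH}_\infty$ by Lemma~\ref{lema_equivalencias}(3). Writing $v=(uv)\cdot u^{-1}=w_0\,(w_1u^{-1})$, I obtain the twin representations
\[
(uv)(Q)=\int_Q w_0w_1\,dx,\qquad v(Q)=\int_Q w_0\,\frac{w_1}{u}\,dx.
\]

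Next I would extract an upper bound for $(uv)(Q)$ from the $\mathrm{RH}_\infty$ condition on $w_1$, namely $w_1(x)\le C\,w_1(Q)/|Q|$ a.e.\ on $Q$, and a lower bound for $v(Q)$ from the $A_1$ condition on $w_0$, namely $w_0(x)\ge c\,w_0(Q)/|Q|$ a.e.\ on $Q$. Taking the quotient, the factor $w_0(Q)$ cancels and the claim reduces to proving
\[
\frac{w_1(Q)}{\int_Q(w_1/u)\,dx}\le C\inf_Q u.
\]
To close this, I would exploit the $\mathrm{RH}_\infty$ property of $w_1/u$: the pointwise bound $w_1(x)/u(x)\le C|Q|^{-1}\int_Q(w_1/u)\,dx$ rewrites as $w_1(x)\le C\,u(x)\,|Q|^{-1}\int_Q(w_1/u)\,dx$, and integration over $Q$ yields $w_1(Q)\le C\,(u(Q)/|Q|)\int_Q(w_1/u)\,dx$. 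The $A_1$ condition on $u$ then provides $u(Q)/|Q|\le [u]_{A_1}\inf_Q u$, completing the chain of inequalities.

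The main conceptual step is the simultaneous factorization in which $uv$ and $v$ share the same $A_1$ factor $w_0$; this is what guarantees the cancellation needed to isolate $w_1$ and $u$. Everything else amounts to routine use of the $A_1$ and $\mathrm{RH}_\infty$ definitions already recorded in the Preliminaries. The slight subtlety worth highlighting is the double use of $\mathrm{RH}_\infty$: first for $w_1$ alone (to bound $(uv)(Q)$), and then for the product $w_1u^{-1}$ (to finally recover the average of $u$), which is precisely what Lemma~\ref{lema_equivalencias}(3) is designed to enable.
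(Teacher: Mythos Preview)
Your proof is correct and follows the same strategy as the paper's: the Jones factorization $uv=w_0w_1$, the observation that $w_1u^{-1}\in\mathrm{RH}_\infty$, and the closing step $u(Q)/|Q|\le[u]_{A_1}\inf_Q u$ are all identical. The only (minor) difference is that where the paper separates $\int_Q w_0w_1$ via H\"older's inequality using the reverse-H\"older exponent $s$ of $w_0$ (and $\mathrm{RH}_{s'}$ for $w_1$), you instead pull $w_1$ and $w_0$ out pointwise via their $\mathrm{RH}_\infty$ and $A_1$ bounds, which is a slightly more streamlined execution of the same idea.
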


\begin{proof} Fix a cube $Q$. From (1) in Lemma~\ref{lema_equivalencias} there exist two weights $w_0,w_1$ such that $uv=w_0w_1$,
with  $w_0\in A_1$ and $w_1\in\textrm{RH}_{\infty}$. From the hypothesis on $u$ and (2),
$u^{-1}\in\textrm{RH}_{\infty}$, so that from (3) we can conclude that
 $w_1u^{-1}\in \textrm{RH}_{\infty}$. Let $s>1$ such that $w_0\in \textrm{RH}_s$.
Then, we have that
\begin{eqnarray*}
\frac{(uv)(Q)}{v(Q)}&=&\frac{(w_0w_1)(Q)}{(w_0w_1u^{-1})(Q)}\\
&\leq&\frac{\int_Q w_0w_1\,dx}{(\inf_Q w_0)\int_Qw_1u^{-1}\,dx}\\
&\leq&[w_1u^{-1}]_{\textrm{RH}_{\infty}}\frac{1}{\inf_Q w_0}\frac{1}{\sup_Q (w_1u^{-1})}\frac{1}{|Q|}\int_Q w_0w_1\,dx\\
&\leq &[w_1u^{-1}]_{\textrm{RH}_{\infty}}\frac{1}{\inf_Q w_0}\frac{1}{ \sup_Q (w_1u^{-1})}\left(\frac{1}{|Q|}\int_Q w_0^s\right)^{1/s}\left(\frac{1}{|Q|}\int_Q w_1^{s'}\right)^{1/s'}\\
&\leq &[w_1u^{-1}]_{\textrm{RH}_{\infty}}\frac{1}{\inf_Q w_0}\frac{1}{\sup_Q (w_1u^{-1})}\frac{[w_0]_{\textrm{RH}_s}w_0(Q)}{|Q|}\frac{[w_1]_{\textrm{RH}_{s'}}w_1(Q)}{|Q|}\\
&\leq&[w_0]_{A_1}[w_0]_{\textrm{RH}_s}[w_1]_{\textrm{RH}_{s'}}[w_1u^{-1}]_{\textrm{RH}_{\infty}}\frac{1}{|Q|}\int_Q \frac{w_1u^{-1}u\,dx}{\sup_Q{(w_1u^{-1})}}\\
&\leq&[w_0]_{A_1}[w_0]_{\textrm{RH}_s}[w_1]_{\textrm{RH}_{s'}}[w_1u^{-1}]_{\textrm{RH}_{\infty}}\frac{u(Q)}{|Q|}\\
&\leq&[w_0]_{A_1}[w_0]_{\textrm{RH}_s}[w_1]_{\textrm{RH}_{s'}}[w_1u^{-1}]_{\textrm{RH}_{\infty}}[u]_{A_1}\inf_Q
u.
\end{eqnarray*}
\end{proof}

%Given a locally integrable function $b$, the \emph{commutator operator} $[b, T]$ is defined for  adequate functions $f$ by
%\[[b, T]f=bT(f)-T(bf).\]

\section{Proof of the main results}

We shall use the following result about  Calder\'on-Zygmund operators.

\begin{teo}\cite[Thm. 1.3]{CU-M-P}\label{teo1.7}
If $u,v$ are weights such that $u,v\in A_1$, or $u\in A_1$ and $v\in
A_{\infty}(u)$, then there exists a positive constant $C$ such that for every
$t>0$,
%\[uv\left\{x\in\R^n: \frac{\mathcal{M}(fv)(x)}{v(x)}>t\right\}\leq \frac{C}{t}\int_{\R^n}|f(x)|u(x)v(x)\,dx,\]
%and
\[uv\left(\left\{x\in\R^n: \frac{T(fv)(x)}{v(x)}>t\right\}\right)\leq \frac{C}{t}\int_{\R^n}|f(x)|u(x)v(x)\,dx,\]
where %$\mathcal{M}$ Hardy-Littlewood  maximal operator and
 $T$ is any Calder\'on-Zygmund
operator.
\end{teo}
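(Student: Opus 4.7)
The plan is to follow the two-step strategy of Cruz-Uribe--Martell--P\'{e}rez in \cite{CU-M-P}: first prove the mixed weak-type estimate for the Hardy--Littlewood maximal operator $M$, then bootstrap to a general Calder\'{o}n--Zygmund operator via the fact that $uv\in A_\infty$. I focus on the case $u\in A_1$, $v\in A_\infty(u)$; the case $u,v\in A_1$ is handled analogously.

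\emph{Setup.} Normalize $t=1$ and assume $f\ge 0$. By Lemma~\ref{lema1.6}, $uv\in A_\infty$, so $d\sigma:=uv\,dx$ is doubling on cubes. The structural estimate
\[
\sigma(Q)\le C(\inf_Q u)\,v(Q)
\]
from Lemma~\ref{lema: fundamental} provides the essential bridge between $\sigma$ and $v$.

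\emph{Step 1 (maximal operator).} For $T=M$, I would perform a Vitali/Calder\'{o}n--Zygmund selection on $\{M(fv)>v\}$ to obtain a pairwise disjoint family of cubes $\{Q_j\}$ such that $\int_{Q_j}fv>c\,v(Q_j)$ and whose fixed dilates still cover the set (here the doubling of $\sigma$ and the comparability of $v$ to its $u$-weighted averages are used to pass from the defining condition $M(fv)(x)>v(x)$ to an average over $Q_j$). Using $\inf_{Q_j}u\le u(x)$ on $Q_j$ together with the Fundamental Lemma,
\[
\sigma(Q_j)\le C(\inf_{Q_j}u)v(Q_j)\le \frac{C}{c}(\inf_{Q_j}u)\int_{Q_j}fv\le \frac{C}{c}\int_{Q_j}f\,uv.
\]
Summing over $j$ and absorbing the dilation via the doubling of $\sigma$ yields $\sigma(\{M(fv)>v\})\le C\int f\,uv$.

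\emph{Step 2 (Calder\'{o}n--Zygmund operator).} Since $\sigma\in A_\infty$, one has a good-$\lambda$ inequality of Coifman--Fefferman type, adapted to a position-dependent threshold: for small $\gamma>0$,
\[
\sigma\bigl(\{|T(fv)|>2\lambda v,\ M(fv)\le\gamma\lambda v\}\bigr)\le C\gamma^{\eta}\,\sigma\bigl(\{|T(fv)|>\lambda v\}\bigr).
\]
Writing $\phi(\lambda):=\sigma(\{|T(fv)|/v>\lambda\})$ and similarly $\phi_M$ for $M$, this gives $\phi(2\lambda)\le \phi_M(\gamma\lambda)+C\gamma^\eta\phi(\lambda)$; choosing $\gamma$ small enough to absorb the last term and invoking Step 1 yields
\[
\|T(fv)/v\|_{L^{1,\infty}(\sigma)}\le C\|M(fv)/v\|_{L^{1,\infty}(\sigma)}\le C\int f\,uv,
\]
which is the desired estimate.

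\emph{Main obstacle.} The hardest point is Step 2, specifically deriving the good-$\lambda$ inequality with the varying threshold $\lambda v(x)$ instead of a uniform $\lambda$. The classical Coifman--Fefferman argument compares super-level sets at a single height; to incorporate $v(x)$ one decomposes $\{|T(fv)|>\lambda v\}$ into Whitney cubes on which $v$ is quasi-constant, a property that follows from the factorization $uv=w_0w_1$ with $w_0\in A_1$ and $w_1\in\mathrm{RH}_\infty$ supplied by Lemma~\ref{lema_equivalencias}, combined with Lemma~\ref{lema: fundamental}. On each Whitney cube the size and smoothness conditions on the standard kernel then yield the required local bound, which when summed gives the global good-$\lambda$ inequality.
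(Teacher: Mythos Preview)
The paper does not prove this statement; it is quoted from \cite{CU-M-P} and used as a black box (for instance in bounding the term $B$ in the proof of Theorem~\ref{teo: main}). The authors do remark, after Theorem~\ref{teo: orden m}, that their Calder\'on--Zygmund argument also covers the case $m=0$, which is exactly this theorem; in that reading the ``paper's proof'' decomposes $f=g+h$ at height $t$ with respect to $v\,dx$, treats the good part via the strong $L^q(uv^{1-q})$ bound for $T$, controls $(uv)(\Omega^*)$ by Lemma~\ref{lema: fundamental}, and handles the bad part away from $\Omega^*$ by Chebyshev together with the kernel smoothness and the cancellation $\int h_jv=0$ (this is the term $A$ in their proof, stripped of the factor $b-b_{Q_j}$; for $m=0$ there is no term $B$). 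No detour through the maximal operator is taken, so your two-step plan is a genuinely different route---it is the original \cite{CU-M-P} scheme, not the paper's.

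Your Step~1 is reasonable in outline. The gap is Step~2. A good-$\lambda$ inequality with a \emph{spatially varying} threshold $\lambda v(x)$ is not the classical Coifman--Fefferman statement, and the mechanism you propose---that $v$ be quasi-constant on Whitney cubes of $\{|T(fv)|>\lambda v\}$---does not follow from the hypotheses. The factorization in Lemma~\ref{lema_equivalencias} concerns $uv$, and unravelling it gives only $v\in A_\infty$, which provides no pointwise comparability of $v$ across a cube; in the Whitney argument one would need $v(\xi)\lesssim v(x)$ for a boundary point $\xi$ of the level set and $x$ in the cube, and a generic $A_\infty$ weight does not satisfy this. The way \cite{CU-M-P} actually pass from $M$ to $T$ is via an $A_\infty$-extrapolation theorem applied to the Coifman--Fefferman inequality $\|Tg\|_{L^p(w)}\le C\|Mg\|_{L^p(w)}$ for $w\in A_\infty$, not via a varying-threshold good-$\lambda$; that is the correct substitute for your Step~2 if you wish to keep the two-step strategy.
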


\begin{proof}[Proof of Theorem~\ref{teo: main}] Note that since $[b,T](f/\normbmo{b})=[b/\normbmo{b}, T] f$, we can assume that  $\normbmo{b}=1$. Without loss of generality, we can assume that $f$ is a bounded, non-negative function with compact support.
Fix $t>0$, and form the  Calder\'on-Zygmund decomposition of $f$ at height $t>0$ with respect to the doubling measure $\mu$ given by
$d\mu(x)=v(x)\,dx$ ($\mu$ is doubling since $v\in A_{\infty}(u)$ implies $v\in A_{\infty}$ (see \cite[Lemma 2.1]{CU-M-P})).
This yields a collection of disjoint dyadic cubes $\{Q_j\}_{j=1}^{\infty}$, such that
$t<f_{Q_j}^v\leq Ct$ for some $C>1$, where $f_{Q_j}^v$ is defined by
\[f_{Q_j}^v=\frac{1}{v(Q_j)}\int_{Q_j}f(y)v(y)\,dy.\]
From this decomposition we have that if $\Omega=\bigcup_{j=1}^{\infty}Q_j$ then $f(x)\leq t$ in almost every $x\in{\R}^n\backslash\Omega$.

We decompose $f$ as $f=g+h$, where
\begin{equation*}
g(x)=\left\{
\begin{array}{ccl}
f(x),& \textrm{ if } &x\in\R^{n}\backslash\Omega;\\
f_{Q_j}^v,&\textrm{ if }& x\in Q_j,
\end{array}
\right.
\end{equation*}
and $h(x)=\s{j=0}{\infty}{h_j(x)}$, with
\begin{equation*}
h_j(x)=\left(f(x)-f_{Q_j}^v\right)\mathcal{X}_{Q_j}(x),
\end{equation*}
where $\mathcal X_E$ denotes the indicator function in the set $E$.
It follows that $g(x)\leq Ct$ almost everywhere, each
 $h_j$ is supported on $Q_j$ and
\begin{equation}\label{integral_cero_v}
\int_{Q_j}h_j(y)v(y)\,dy=0.
\end{equation}
With $cQ$, $c>0$, we will denote the cube concentric with $Q$ whose side length is $c$ times the side length of $Q$. So let $Q_j^*=3Q_j$ and $\Omega^*=\bigcup_j Q_j^*$. Then
\begin{align*}
uv\left(\left\{x\in\R^n: \left|\frac{[b,T](fv)(x)}{v(x)}\right|>t\right\}\right)& \leq uv\left(\left\{x\in\R^n: \left|\frac{[b,T](gv)(x)}{v(x)}\right|>\frac{t}{2}\right\}\right)\\
&+uv\left(\left\{x\in\R^n: \left|\frac{[b,T](hv)(x)}{v(x)}\right|>\frac{t}{2}\right\}\right)\\
&\leq uv\left(\left\{x\in\R^n: \left|\frac{[b,T](gv)(x)}{v(x)}\right|>\frac{t}{2}\right\}\right)\\&+(uv)(\Omega^*)\\
&+uv\left(\left\{x\in\R^n\backslash\Omega^*: \left|\frac{[b,T](hv)(x)}{v(x)}\right|>\frac{t}{2}\right\}\right)\\
&=I+II+III.
\end{align*}
We shall estimate each term separately.  Since $v\in
A_{\infty}(u)$, there exists $q'>1$ such that $v\in A_{q'}(u)$, so that
$v^{1-q}\in A_q(u)$ and  by Lemma~\ref{lema1.6} we have that
$uv^{1-q}\in A_q$. By applying Tchebychev's inequality with
 $q>1$  we obtain
\begin{align*}
I&=uv\left(\left\{x\in\R^n: \left|\frac{[b,T](gv)(x)}{v(x)}\right|>\frac{t}{2}\right\}\right)\\
&\leq \frac{C}{t^q}\int_{\R^n} |[b,T](gv)(x)|^qu(x)v^{1-q}(x)\,dx\\
&\leq \frac{C}{t^q}\int_{\R^n} g(x)^qu(x)v(x)\,dx\\
&\leq \frac{C}{t}\int_{\R^n} g(x)u(x)v(x)\,dx,
\end{align*}
since $uv^{1-q}\in A_{q}$ implies that the commutator  $[b,T]$ is bounded
on $L^q(uv^{1-q})$ (see remark after Lemma~\ref{lema: tipo fuerte Tbk}) and   $g(x)\leq Ct$.
From the definition of $g$ and Lemma~\ref{lema: fundamental} we get that
\begin{eqnarray*}
I&\leq& \frac{C}{t}\int_{\R^n\backslash\Omega}
f(x)u(x)v(x)\,dx+\frac{C}{t}\s{j=1}{\infty}{\frac{(uv)(Q_j)}{v(Q_j)}\int_{Q_j}f(y)v(y)\,dy}\\
&\leq& \frac{C}{t}\int_{\R^n\backslash\Omega}f(x)u(x)v(x)\,dx+\frac{C}{t}\s{j=1}{\infty}{\int_{Q_j}f(x)u(x)v(x)\,dx}\\
&\leq&\frac{C}{t}\int_{\R^n}f(x)u(x)v(x)\,dx.
\end{eqnarray*}
In order to estimate $II$, since $uv$ is doubling and by applying Lemma~\ref{lema: fundamental} we get
\begin{eqnarray*}
II&=&(uv)(\Omega^*)=\s{j}{}{(uv)(Q_j^*)}\\
&\leq&C\s{j}{}{v(Q_j)\frac{(uv)(Q_j)}{v(Q_j)}}\\
&\leq&C\s{j}{}{(\inf_{Q_j}u)\frac{1}{t}\int_{Q_j}f(x)v(x)\,dx}\\
&\leq&\frac{C}{t}\int_{\R^n}f(x)u(x)v(x)\,dx.
\end{eqnarray*}
By observing that
\[\frac{[b,T](hv)}{v}=\s{j}{}{\frac{[b,T](h_jv)}{v}}=\s{j}{}{\frac{(b-b_{Q_j})T(h_jv)}{v}}-\s{j}{}{\frac{T\left((b-b_{Q_j})h_jv\right)}{v}},\]
the estimate of $III$ can be made as follows
\begin{eqnarray*}
III&\leq&uv\left(\left\{x\in\R^n\backslash\Omega^*: \left|\s{j}{}{\frac{(b-b_{Q_j})T(h_jv)}{v}}\right|>\frac{t}{4}\right\}\right)\\
&&+uv\left(\left\{x\in\R^n\backslash\Omega^*: \left|\s{j}{}{\frac{T((b-b_{Q_j})h_jv)}{v}}\right|>\frac{t}{4}\right\}\right)\\
=A+B.
\end{eqnarray*}
Let us first estimate $A$. From the Tchebychev's inequality, Tonelli's theorem and \eqref{integral_cero_v} we have that
\begin{eqnarray*}
A&\leq&\frac{C}{t}\int_{\R^n\backslash\Omega^*}\s{j}{}{|b(x)-b_{Q_j}||T(h_jv)(x)|u(x)\,dx}\\
&\leq&\frac{C}{t}\s{j}{}{\int_{\R^n\backslash Q_j^*}|b(x)-b_{Q_j}|\left|\int_{Q_j}h_j(y)v(y)K(x-y)\,dy\right|u(x)\,dx}\\
&=&\frac{C}{t}\s{j}{}{\int_{\R^n\backslash Q_j^*}|b(x)-b_{Q_j}|\left|\int_{Q_j}h_j(y)v(y)\left[K(x-y)-K(x-x_{Q_j})\right]\,dy\right|u(x)\,dx}\\
&\leq&\frac{C}{t}\s{j}{}{\int_{Q_j}|h_j(y)|v(y)\int_{\R^n\backslash
Q_j^*}\left|b(x)-b_{Q_j}\right|\left|K(x-y)-K(x-x_{Q_j})\right|u(x)\,dx\,dy}.
\end{eqnarray*}
Given a cube $Q_j$, denote $x_{Q_j}$
its center, $\ell(Q_j)$ the length of its side, $r_j=2^{-1}\ell(Q_j)$ and
$A_{j,k}=\{x: 2^kr_j\leq |x-x_{Q_j}|<2^{k+1}r_j\}$. Then, for each $y\in Q_j$, from \eqref{eq:prop del nucleo} we have
\begin{align*}
\int_{\R^n\backslash Q_j^*}|b(x)-b_{Q_j}||K(x-y)&-K(x-x_{Q_j})|u(x)\,dx\\ &\leq\s{k=1}{\infty}{\int_{A_{j,k}}|b(x)-b_{Q_j}|\frac{|y-x_{Q_j}|}{|x-x_{Q_j}|^{n+1}}u(x)\,dx}\\
&\leq\s{k=1}{\infty}{\frac{r_j}{(2^kr_j)^{n+1}}\int_{2^{k+1}Q_j}|b(x)-b_{Q_j}|u(x)\,dx}.
\end{align*}
Then, from Lemmas~\ref{lema1.1} and~\ref{lema:  promedios dilatados y BMO}, and the fact that $u\in A_1$,
\begin{align*}
\s{k=1}{\infty}{\frac{r_j}{(2^kr_j)^{n+1}}\int_{2^{k+1}Q_j}|b-b_{Q_j}|u\,dx}&\leq
 C\s{k=1}{\infty}{\frac{2^{-k}}{|2^{k+1}Q_j|}\int_{2^{k+1}Q_j}|b-b_{Q_j}|u\,dx}\\
&\leq C\s{k=1}{\infty}{\frac{2^{-k}}{|2^{k+1}Q_j|}\int_{2^{k+1}Q_j}|b-b_{2^{k+1}Q_j}|u\,dx}\\
&+ C\s{k=1}{\infty}{\frac{2^{-k}}{|2^{k+1}Q_j|}\int_{2^{k+1}Q_j}|b_{2^{k+1}Q_j}-b_{Q_j}|u\,dx}\\
&\leq C\s{k=1}{\infty}{2^{-k}\frac{u(2^{k+1}Q_j)}{|2^{k+1}Q_j|}\frac{1}{u(2^{k+1}Q_j)}\int_{2^{k+1}Q_j}|b-b_{2^{k+1}Q_j}|u\,dx}\\
&+C\s{k=1}{\infty}{2^{-k}C(k+1)u(y)}\\
&\leq  Cu(y),
\end{align*}
 Hence, from Lemma~\ref{lema: fundamental} we obtain that
\begin{eqnarray*}
A&\leq&\frac{C}{t}\s{j}{}{\int_{Q_j}|h_j(y)|u(y)v(y)\,dy}\\
&\leq&\frac{C}{t}\s{j}{}{\left(\int_{Q_j}f(y)u(y)v(y)\,dy+\int_{Q_j}f_{Q_j}^{v}u(y)v(y)\,dy\right)}\\
&\leq&\frac{C}{t}\s{j}{}{\left(\int_{Q_j}f(y)u(y)v(y)\,dy+\frac{(uv)(Q_j)}{v(Q_j)}\int_{Q_j}f(y)v(y)\,dy\right)}\\
&\leq&\frac{C}{t}\int_{\R^n}f(y)u(y)v(y)\,dy,
\end{eqnarray*}
 We shall finally estimate
 $B$. By applying Theorem~\ref{teo1.7} we get
\begin{eqnarray*}
B&\leq&\frac{C}{t}\int_{\R^n}\left|\s{j}{}{(b(x)-b_{Q_j})h_j(x)}\right|u(x)v(x)\,dx\\
&\leq&\frac{C}{t}\s{j}{}{\int_{Q_j}|b(x)-b_{Q_j}|f(x)u(x)v(x)\,dx}+\frac{C}{t}\s{j}{}{\int_{Q_j}|b(x)-b_{Q_j}|f_{Q_j}^{v}u(x)v(x)\,dx}\\
&=&B_1+B_2.
\end{eqnarray*}
From the generalized H\"{o}lder's inequality with respect to the doubling measure
$w=uv$, Lemma~\ref{lema: con pesos menor que sin pesos} and Lemmas~\ref{lema: perez}
and~\ref{lema: fundamental},
we obtain
\begin{eqnarray*}
B_1&\leq&\frac{C}{t}\s{j}{}{(uv)(Q_j)\normexplv{b-b_{Q_j}}{uv}{Q_j}\normlloglv{f}{uv}{Q_j}}\\
&\leq&\frac{C}{t}\s{j}{}{(uv)(Q_j)\|b-b_{Q_j}\|_{\textrm{exp}L,Q_j}\inf_{\tau>0}\left\{\tau+\frac{\tau}{(uv)(Q_j)}\int_{Q_j}\Phi\left(\frac{f}{\tau}\right)uv\,dx\right\}}\\
&\leq & C\s{j}{}{\left((uv)(Q_j)+\int_{Q_j}\Phi\left(\frac{f}{t}\right)uv\,dx\right)}\\
&\leq & C\s{j}{}{\left(v(Q_j)\inf_{Q_j}u+\int_{Q_j}\Phi\left(\frac{f}{t}\right)uv\,dx\right)}\\
&\leq & C\s{j}{}{\left(\inf_{Q_j}u\frac{1}{t}\int_{Q_j}fv\,dx+\int_{Q_j}\Phi\left(\frac{f}{t}\right)uv\,dx\right)}\\
&\leq&C\int_{\R^n}\Phi\left(\frac{f}{t}\right)uv\,dx.
\end{eqnarray*}
In order to estimate $B_2$, let $s$ be the reverse H\"{o}lder exponent of the weight $uv$. Then, by applying H\"{o}lder's inequality we get
\begin{eqnarray*}
B_2&\leq&\frac{C}{t}\s{j}{}{\frac{|Q_j|}{v(Q_j)}\left(\int_{Q_j}f(y)v(y)\,dy\right)\left(\frac{1}{|Q_j|}\int_{Q_j}|b-b_{Q_j}|^{s'}\right)^{1/s'}\hspace*{-0.2cm}\left(\frac{1}{|Q_j|}\int_{Q_j}(uv)^s\right)^{1/s}}\\
&\leq&\frac{C}{t}\s{j}{}{\frac{(uv)(Q_j)}{v(Q_j)}\int_{Q_j}f(y)v(y)\,dy}\\
&\leq&C\frac{1}{t}\s{j}{}{\inf_{Q_j}u\int_{Q_j}f(y)v(y)\,dy}\\
&\leq&C\frac{1}{t}\int_{\R^n}f(y)u(y)v(y)\,dy,
\end{eqnarray*}
and the result is proved.
\end{proof}

%\section{Higher-order commutators}

%In order to show the theorem above, we shall state and prove the following technical lemma.

%\begin{lema}\label{lema: factorizacion con inversas} Let $A,B,C$ be continuous and  a functions. Define $A^{-1}(x)=\inf\{y: A(y)>x\}$, where $\inf \emptyset =\infty$.
%If for every $0\leq x< \infty$ we have
%\[A^{-1}(x)B^{-1}(x)\leq C^{-1}(x),\]
%then $C(xy)\leq A(x)+B(y)$,  for every $0 \leq x,y<\infty$.
%\end{lema}
%
%\begin{proof}
%We claim that $A(A^{-1}(x))\leq x\leq A^{-1}(A(x))$. Indeed,
%\[A(A^{-1}(x))=A(\inf\{y: A(y)>x\})=\inf\{A(y): A(y)>x\}=x,\]
%since $A$ is a continuous function. On the other hand, if $y$ is such that $A(y)>A(x)$,
%being  $A$ increasing we have that $y>x$. Then $x<y$ for every $y$ that satisfies $A(y)>A(x)$.
%Hence
%\[x\leq \inf\{y: A(y)>A(x)\}=A^{-1}(A(x)).\]
%Let us note also that if $x\leq z$ then $A^{-1}(x)\leq A^{-1}(z)$, since
%\[\{y: A(y)>z\}\subseteq \{y: A(y)>x\}.\]
%We shall use that the two facts above hold for $A$, $B$ and $C$.
%
%Fix  $0\leq x,y<\infty$. Assume first that $A(x)\leq B(y)$. Then
%\[xy\leq A^{-1}(A(x))B^{-1}(B(y))\leq A^{-1}(B(y))B^{-1}(B(y))\leq C^{-1}(B(y)),\]
%and we have that $C(xy)\leq C(C^{-1}(B(y)))\leq B(y)$.
%On the other hand, if  $A(x)>B(y)$ we have
%\[xy\leq A^{-1}(A(x))B^{-1}(B(y))\leq A^{-1}(A(x))B^{-1}(A(x))\leq C^{-1}(A(x)),\]
%and then $C(xy)\leq C(C^{-1}(A(x)))\leq A(x)$.
%%
%Hence, \[C(xy)\leq \max\{A(x),B(y)\}\leq A(x)+B(y),\] which proves the result.
%\end{proof}

\begin{proof}[Proof of Theorem~\ref{teo: orden m}]
Again, since $T^m_b(f/\normbmo{b})=T^m_{b/\normbmo{b}}(f)$, we assume that $\normbmo{b}=1$.
Without loss of generality, we can also assume that $f$ is a bounded, non-negative function with compact support.
Fix a positive integer $m$. We will use an induction argument.
The case $m=1$ is proved in Theorem~\ref{teo: main}.
Assume that the result holds for every
 $1\leq k\leq m-1$.
For a fixed $t>0$, we consider again the
 Calder\'on-Zygmund decomposition of  $f$ at height $t$ with respect to the doubling measure $\mu$ given by
$d\mu(x)=v(x)\,dx$. Then, with the same notation as in the proof of Theorem~\ref{teo: main}, we have that
\begin{eqnarray*}
uv\left(\left\{x\in \R^n: \left|\frac{T_b^m(fv)(x)}{v(x)}\right|>t\right\}\right)&\leq&uv\left(\left\{x\in \R^n: \left|\frac{T_b^m(gv)(x)}{v(x)}\right|>\frac{t}{2}\right\}\right)\\
&&+uv(\Omega^*)+uv\left(\left\{x\in \R^n\backslash\Omega^*: \left|\frac{T_b^m(hv)(x)}{v(x)}\right|>\frac{t}{2}\right\}\right)\\
&=&I+II+III.
\end{eqnarray*}

We shall first estimate $I$. Under the hypothesis on $u$ and $v$, there exists $q>1$ such that $uv^{1-q}\in A_q$.
From Tchebychev's inequality and the remark after Lemma~\ref{lema: tipo fuerte Tbk} we obtain
\begin{eqnarray*}I&\leq&\frac{C}{t^q}\int_{\R^n}|T_b^m(gv)(x)|^qu(x)v^{1-q}(x)\,dx\\
&\leq&\frac{C}{t^q}\int_{\R^n}|g(x)|^qu(x)v(x)\,dx\\
&\leq&\frac{C}{t}\int_{\R^n}|g(x)|u(x)v(x)\,dx,
\end{eqnarray*}
and now we proceed as in the proof of Theorem~\ref{teo: main} to obtain the desired estimate. The estimation of $II$  is obtained exactly as in that theorem.

Then, we shall focus on $III$.
Observe that $h_jv$ is supported on $Q_j$, so that if $x\not\in Q_j$
\begin{align*}
T_b^m(h_jv)(x)&= \int_{\R^n}\left(b(x)-b(y)\right)^mK(x-y)h_j(y)v(y)\,dy\\
&= \s{\ell=0}{m}{C_{\ell,m}(b(x)-b_{Q_j})^{m-\ell}\int_{\R^n}(b(y)-b_{Q_j})^\ell K(x-y)h_j(y)v(y)\,dy}\\     %(-1)^\ell
&= C_{0,m}(b(x)-b_{Q_j})^mT(h_jv)(x)+C_{m,m}T\left((b-b_{Q_j})^mh_jv\right)(x)\\    %(-1)^m
& +\s{\ell=1}{m-1}{C_{\ell,m}(b(x)-b_{Q_j})^{m-\ell}\int_{\R^n}(b(y)-b_{Q_j})^\ell K(x-y)h_j(y)v(y)\,dy}.  %(-1)^\ell
\end{align*}
Note that, expanding as before the binomial expression $(b(x)-b_{Q_j})^{m-\ell}$, we obtain
\begin{align*}
\s{\ell=1}{m-1}{}&C_{\ell,m}(b(x)-b_{Q_j})^{m-\ell}\int_{\R^n}(b(y)-b_{Q_j})^\ell K(x-y) h_j(y)v(y)\,dy\\  %(-1)^\ell
&=\s{\ell=1}{m-1}{}C_{\ell,m}\s{i=0}{m-\ell}{C_{i,\ell,m}}\int_{\R^n}(b(x)-b(y))^i(b(y)-b_{Q_j})^{m-i}K(x-y)h_j(y)v(y)\,dy\\
&=\s{i=1}{m-1}{}\s{\ell=1}{m-i}C_{\ell,m}C_{i,m,\ell}\int_{\R^n}(b(x)-b(y))^i(b(y)-b_{Q_j})^{m-i}K(x-y)h_j(y)v(y)\,dy\\
&+\s{\ell=1,i=0}{m-1}{}C_{\ell,m}C_{0,m,\ell}\int_{\R^n}(b(y)-b_{Q_j})^mK(x-y)h_j(y)v(y)\,dy.
\end{align*}
%(see Figure~1).
%%%%%%%%%%%%%%%%%%%%%%%%%%%%%%%%%%%
%\vspace*{-0.5cm}
%\begin{figure}[h!tb]\label{fig: indices suma}
%\begin{center}
%\begin{tikzpicture}[scale=0.75]
%\draw[-stealth] (-0.2,0) -- (5,0);
%\draw[-stealth] (0,-0.2) -- (0,4.5);
%\foreach \x in {1,...,8}
%\draw[fill=black] (\x/2,0) circle (1.5 pt);
%\foreach \x in {1,...,8}
%\draw[|-](\x/2,0)--(\x/2+0.5,0);
%\foreach \x in {1,...,8}
%\draw[|-](0,\x/2)--(0,\x/2+0.5);
%\foreach \x in {1,...,7}
%\draw[fill=black] (\x/2,0.5) circle (1.5 pt);
%\foreach \x in {1,...,6}
%\draw[fill=black] (\x/2,1) circle (1.5 pt);
%\foreach \x in {1,...,5}
%\draw[fill=black] (\x/2,1.5) circle (1.5 pt);
%\foreach \x in {1,...,4}
%\draw[fill=black] (\x/2,2) circle (1.5 pt);
%\foreach \x in {1,...,3}
%\draw[fill=black] (\x/2,2.5) circle (1.5 pt);
%\foreach \x in {1,...,2}
%\draw[fill=black] (\x/2,3) circle (1.5 pt);
%\draw[fill=black] (1/2,3.5) circle (1.5 pt);
%\draw[dashed,rounded corners] (0.3,-0.2)--(4.5,-0.2)--(0.3,4)--cycle;
%\node[left] at (0,4) {$m$};
%\node[left] at (0,0.5) {$1$};
%\node[left] at (0,4.5) {$i$};
%\node[below] at (0.5,-0.2) {$1$};
%\node[below] at (4,-0.2) {$m-1$};
%\node[below] at (5,-0.2) {$\ell$};
%\end{tikzpicture}
%\caption{$1\leq\ell\leq m-1$; $0\leq i\leq m-\ell$}
%\end{center}
%\end{figure}
%%%%%%%%%%%%%%%%%%%%%%%%%%%%%%%%%%%%%

\noindent Thus we can write
\begin{align*}
\s{i=0}{m-1}{}&C_{i,m}\int_{\R^n} (b(x)-b(y))^i(b(y)-b_{Q_j})^{m-i}K(x-y)h_j(y)v(y)\,dy\\
& =C_{0,m}T((b-b_{Q_j})^mh_jv)(x)+\s{i=1}{m-1}{C_{i,m}T_b^i\left((b-b_{Q_j})^{m-i}h_jv\right)(x)},
\end{align*}
so that
\begin{align*}
\left|\s{j}{}{T_b^m(h_jv)(x)}\right|&=\left|C_{0,m}\s{j}{}{(b(x)-b_{Q_j})^mT(h_jv)(x)}\right|\\
&+\left|C_{m,m}\s{j}{}{T\left((b-b_{Q_j})^mh_jv\right)(x)}\right|\\
&+ \left|\s{j}{}{}\s{i=1}{m-1}{C_{i,m}T_b^i((b-b_{Q_j})^{m-i}h_jv)(x)}\right|.
\end{align*}
Then we can estimate $III$ as follows:
\begin{align*}
III&\leq uv\left(\left\{x\in\R^n\backslash\Omega^* : \left|C_{0,m}\s{j}{}{\frac{(b-b_{Q_j})^mT(h_jv)(x)}{v(x)}}\right|>\frac{t}{6}\right\}\right)\\
&+uv\left(\left\{x\in\R^n\backslash\Omega^* : \left|C_{m,m}\s{j}{}{\frac{T((b-b_{Q_j})^mh_jv)(x)}{v(x)}}\right|>\frac{t}{6}\right\}\right)\\
&+uv\left(\left\{x\in\R^n\backslash\Omega^* : \left|\s{i=1}{m-1}{C_{i,m}\frac{T_b^i(\s{j}{}{}(b-b_{Q_j})^{m-i}h_jv)(x)}{v(x)}}\right|>\frac{t}{6}\right\}\right)\\
&= I_1+I_2+I_3.
\end{align*}
In order to estimate $I_1$, we use Tchebychev's inequality  to obtain
\begin{align*}
I_1&\leq\frac{C}{t}\int_{\R^n\backslash\Omega^*}\left|\s{j}{}{(b(x)-b_{Q_j})^mT(h_jv)(x)}\right|u(x)\,dx\\
&\leq\frac{C}{t}\s{j}{}{\int_{\R^n\backslash Q_j^*}|b(x)-b_{Q_j}|^m\left|\int_{Q_j}\hspace*{-0.2cm}\left(K(x-y)-K(x-x_{Q_j})\right)h_j(y)v(y)\,dy\right|u(x)\,dx}\\
&\leq\frac{C}{t}\s{j}{}{\int_{Q_j}|h_j(y)|v(y)\int_{\R^n\backslash Q_j^*}|b(x)-b_{Q_j}|^m|K(x-y)-K(x-x_{Q_j})|u(x)\,dx\,dy}\\
&\leq\frac{C}{t}\s{j}{}{\int_{Q_j}|h_j(y)|v(y)\s{k=1}{\infty}{\int_{A_{j,k}}|b(x)-b_{Q_j}|^m\frac{|y-x_{Q_j}|}{|x-x_{Q_j}|^{n+1}}u(x)\,dx\,dy}}\\
&\leq\frac{C}{t}\s{j}{}{\int_{Q_j}|h_j(y)|v(y)\s{k=1}{\infty}{\frac{2^{-k}}{|2^{k+1}Q_j|}\int_{2^{k+1}Q_j}|b(x)-b_{Q_j}|^mu(x)\,dx\,dy}},
\end{align*}
where $A_{j,k}$ is the set defined in the proof of Theorem~\ref{teo: main}.
For $y\in Q_j$ we can bound the sum over $k$ by
\begin{align*}
\s{k=1}{\infty}{}\frac{2^{-k}}{|2^{k+1}Q_j|}&\int_{2^{k+1}Q_j}|b(x)-b_{Q_j}|^mu(x)\,dx\\
&\leq2^m\s{k=1}{\infty}{\frac{2^{-k}}{|2^{k+1}Q_j|}\int_{2^{k+1}Q_j}|b(x)-b_{2^{k+1}Q_j}|^mu(x)\,dx}\\
&+2^m\s{k=1}{\infty}{\frac{2^{-k}}{|2^{k+1}Q_j|}\int_{2^{k+1}Q_j}|b_{Q_j}-b_{2^{k+1}Q_j}|^mu(x)\,dx}.
\end{align*}

With a change of variables we easily get that
\begin{equation}\label{eq: expL1/m,expL}
\|g^m\|_{\mathrm{exp}L^{1/m},Q}=\|g\|^m_{\mathrm{exp}L,Q},
\end{equation}
for every cube $Q$. From this fact and the generalized
H\"{o}lder's inequality we obtain
\begin{align*}
2^m\s{k=1}{\infty}{}\frac{2^{-k}}{|2^{k+1}Q_j|}&\int_{2^{k+1}Q_j}|b(x)-b_{2^{k+1}Q_j}|^mu(x)\,dx\\
&\leq C\|(b-b_{2^{k+1}Q_j})^m\|_{\mathrm{exp}L^{1/m},2^{k+1}Q_j}\|u\|_{L(\mathrm{log}L)^m,2^{k+1}Q_j}\\&\leq
C\|b-b_{2^{k+1}Q_j}\|_{\mathrm{exp}L,2^{k+1}Q_j}^mM_{L(\mathrm{log}L)^m}u(y)\\&\leq CM^{m+1}u(y)\\&\leq Cu(y),
\end{align*}
where we have used \eqref{eq: maximal equiv a M} and that $u\in A_1$.

On the other hand, from Lemma~\ref{lema:  promedios dilatados y BMO} we have that
\begin{align*}
2^m\s{k=1}{\infty}{}\frac{2^{-k}}{|2^{k+1}Q_j|}&\int_{2^{k+1}Q_j}|b_{Q_j}-b_{2^{k+1}Q_j}|^mu(x)\,dx\\
&\leq C\s{k=1}{\infty}{\frac{2^{-k}}{|2^{k+1}Q_j|}\int_{2^{k+1}Q_j}(k+1)^m u(x)\,dx}\\
&\leq Cu(y)\s{k=1}{\infty}{2^{-k}(k+1)^m}\\
&=Cu(y).
\end{align*}
Hence
\[I_1\leq\frac{C}{t}\s{j}{}{\int_{Q_j}|h_j(y)|u(y)v(y)\,dy},\]
and then we proceed as in the proof of Theorem~\ref{teo: main}.
In order to estimate $I_2$, we use Theorem~\ref{teo1.7} to get that
\begin{align*}
I_2&\leq\frac{C}{t}\int_{\R^n}\left|\s{j}{}{(b(x)-b_{Q_j})^mh_j(x)}\right|u(x)v(x)\,dx\\
&\leq\frac{C}{t}\s{j}{}{\int_{Q_j}|b(x)-b_{Q_j}|^m|h_j(x)|u(x)v(x)\,dx}\\
&\leq\frac{C}{t}\s{j}{}{\int_{Q_j}|b(x)-b_{Q_j}|^mf(x)u(x)v(x)\,dx}\\
&+\frac{C}{t}\s{j}{}{\int_{Q_j}|b(x)-b_{Q_j}|^mf_{Q_j}^v u(x)v(x)\,dx}\\
&=I_{2,1}+I_{2,2}.
\end{align*}
From the generalized H\"{o}lder's inequality with respect to the measure $\mu$ defined by $d\mu=uv\,dx$, and by applying \eqref{eq: expL1/m,expL} we have that
\begin{align*}
I_{2,1}&\leq\frac{C}{t}\s{j}{}{(uv)(Q_j)\|(b-b_{Q_j})^m\|_{\mathrm{exp}L^{1/m},Q_j,uv}\|f\|_{L(\mathrm{log}L)^m,Q_j,uv}}\\
&\leq\frac{C}{t}\s{j}{}{(uv)(Q_j)}\|b-b_{Q_j}\|_{\mathrm{exp}L,Q_j,uv}^m\left\{t+\frac{t}{uv(Q_j)}\int_{Q_j}\Phi_m\left(\frac{f}{t}\right)uv\,dx\right\}\\
&\leq C\s{j}{}{\left(\frac{(uv)(Q_j)}{v(Q_j)}v(Q_j)+\int_{Q_j}\Phi_m\left(\frac{f(x)}{t}\right)u(x)v(x)\,dx \right)}\\
&\leq C\s{j}{}{\left(\frac{1}{t}\int_{Q_j}f(x)u(x)v(x)\,dx+\int_{Q_j}\Phi_m\left(\frac{f(x)}{t}\right)u(x)v(x)\,dx\right)}\\
&\leq C\int_{\R^n}\Phi_m\left(\frac{f(x)}{t}\right)u(x)v(x)\,dx,
\end{align*}
where we have used Lemma~\ref{lema: fundamental}.

For  $I_{2,2}$, we apply H\"{o}lder's inequality with exponent $s$ y $s'$, where $s>1$ is such that $uv\in\mathrm{RH}_s$. Then,
\begin{align*}
I_{2,2}&\leq\s{j}{}{}\frac{C}{t}\frac{|Q_j|}{v(Q_j)}\left(\frac{1}{|Q_j|}\int_{Q_j}|b(x)-b_{Q_j}|^{ms'}\right)^{1/s'}\left(\frac{1}{|Q_j|}\int_{Q_j}(u(x)v(x))^s\right)^{1/s}\int_{Q_j}f(x)v(x)\,dx\\
&\leq\frac{C}{t}\s{j}{}{\frac{(uv)(Q_j)}{v(Q_j)}\int_{Q_j}f(x)v(x)\,dx}\\
&\leq\frac{C}{t}\int_{\R^n}f(x)u(x)v(x)\,dx,
\end{align*}
by applying again Lemma~\ref{lema: fundamental}.
It only remains the estimation of $I_3$.
\begin{align*}
I_3&\leq\s{i=1}{m-1}{uv\left(\left\{x\in \R^n\backslash\Omega^*: \left|\frac{T_b^i(\s{j}{}{(b-b_{Q_j})^{m-i}h_jv})(x)}{v(x)}\right|>\frac{t}{C}\right\}\right)}\\
&\leq\s{i=1}{m-1}{uv\left(\left\{x\in \R^n\backslash\Omega^*: \left|\frac{T_b^i(\s{j}{}{(b-b_{Q_j})^{m-i}f\mathcal{X}_{Q_j}v})(x)}{v(x)}\right|>\frac{t}{C}\right\}\right)}\\
&+\s{i=1}{m-1}{uv\left(\left\{x\in \R^n\backslash\Omega^*: \left|\frac{T_b^i(\s{j}{}{(b-b_{Q_j})^{m-i}f_{Q_j}^{v}\mathcal{X}_{Q_j}v})(x)}{v(x)}\right|>\frac{t}{C}\right\}\right)}\\
&=I_{3,1}+I_{3,2}.
\end{align*}
In order to estimate $I_{3,1}$ and $I_{3,2}$ we will use the inductive hypothesis and the following fact:
if $A$, $B$ and $C$ are Young functions satisfying
$A^{-1}(t)B^{-1}(t)\lesssim C^{-1}(t)$ for every $t>0$, it is easy to see that
\begin{equation*} %\label{eq: inversas}
C(st)\leq A(s)+B(t),
\end{equation*} holds for every $0 \leq s,t<\infty$.

Let us take $\alpha$ such that $\alpha s'<C_2$, where $C_2$ is the constant that appears in \eqref{eq: J-N} for $b$ and $s'$ is the conjugated exponent of $s$, that verifies $uv\in\mathrm{RH}_s$. Thus, if $\Psi_k(t)=e^{\alpha t^{1/k}}-1$ then
$\Psi_k^{-1}(t)\approx (\log (e+t))^k$. Since $\Phi_m^{-1}(t)\approx t/(\log (e+t))^m$, then $\Phi_m^{-1}(t) \Psi_{m-i}^{-1}(t)\lesssim \Phi_i^{-1}(t)$, so that
by the inductive hypothesis  we obtain  %and \eqref{eq: inversas}
\begin{align*}
I_{3,1}&\leq C\s{i=1}{m-1}{\int_{\R^n}\Phi_i\left(\frac{\s{j}{}{(b(x)-b_{Q_j})^{m-i}f(x)\mathcal{X}_{Q_j}(x)}}{t}\right)u(x)v(x)\,dx}\\
&\leq C\s{i=1}{m-1}{}\s{j}{}{\int_{Q_j}\Phi_i\left(\frac{(b(x)-b_{Q_j})^{m-i}f(x)}{t}\right)u(x)v(x)\,dx}\\
&\leq C\s{h=1}{m-1}{\s{j}{}{\int_{Q_j}\left(\Phi_m\left(\frac{f(x)}{t}\right)+\Psi_{m-i}\left((b(x)-b_{Q_j})^{m-i}\right)\right)u(x)v(x)\,dx}}.
\end{align*}
 Now applying H\"{o}lder's inequality, \eqref{eq: J-N} and
 Lemma~\ref{lema: fundamental} we have that
\begin{align*}
\int_{Q_j}\Psi_{m-i}&\left((b(x)-b_{Q_j})^{m-i}\right)u(x)v(x)\,dx\\&\leq\int_{Q_j}e^{\alpha|b(x)-b_{Q_j}|}u(x)v(x)\,dx\\
&\leq\left(\frac{1}{|Q_j|}\int_{Q_j}e^{|b(x)-b_{Q_j}|\alpha s'}\,dx\right)^{1/s'}\left(\frac{1}{|Q_j|}\int_{Q_j}(u(x)v(x))^s\,dx\right)^{1/s}|Q_j|\\
&\leq \left(\int_0^\infty \alpha s'e^{\alpha s' \lambda}C_1e^{-\lambda C_2}\,d\lambda\right)^{1/s'}(uv)(Q_j)\\
&=C(uv)(Q_j)\\
&\leq C\frac{(uv)(Q_j)}{v(Q_j)}v(Q_j)\\
&\leq \frac{C}{t}\int_{Q_j}f(x)u(x)v(x)\,dx.
\end{align*}
Then,
\[I_{3,1}\leq \frac{C}{t}\int_{\R^n}\Phi_m\left(\frac{f(x)}{t}\right)u(x)v(x)\,dx,\]
as desired. In the case of $I_{3,2}$, let us first note that
\begin{align*}
\frac{f_{Q_j}^v}{t}=\frac{1}{t}\frac{1}{v(Q_j)}\int_{Q_j}f(y)v(y)\,dy
\leq\frac{1}{t}\,Ct=C,
\end{align*}
so that $\Phi_m\left(\frac{f_{Q_j}^v}{t}\right)\leq\Phi_m(C)$. Hence
\begin{align*}
I_{3,2}&\leq C\s{i=1}{m-1}{\s{j}{}{\int_{Q_j}\Phi_i\left(\frac{(b(x)-b_{Q_j})^{m-i}f_{Q_j}^v}{t}\right)uv\,dx}}\\
&\leq C\s{i=1}{m-1}{\s{j}{}{\left(\int_{Q_j}\Phi_m\left(\frac{f_{Q_j}^v}{t}\right)u(x)v(x)\,dx+\int_{Q_j}\Psi_{m-i}\left((b(x)-b_{Q_j})^{m-i}\right)uv\,dx\right)}}\\
&\leq C\s{i=1}{m-1}{\s{j}{}{\frac{(uv)(Q_j)}{v(Q_j)}v(Q_j)}}\\
&\leq \frac{C}{t}\int_{\R^n}f(x)uv\,dx,
\end{align*}
which is the desired estimation for $k=m$, and the result is proved.
\end{proof}

\bigskip

For the proof of Theorem~\ref{teo: caso m_phi} concerning to a mixed inequality for $M_{\Phi}$ we shall use the following technical lemma whose proof is given in \cite{O-P}.
\bigskip
\begin{lema}\label{lema_igualdad_de_integral}
 Let $f$ be a positive and locally integrable function. Then for each $\gamma,\lambda>0$ there exists a number $a\in\R^+$ which depends on $f$ and $\lambda$ that satisfies
 \bigskip
\[\left(\int_{|y|\leq a^{\gamma}}f(y)\,dy\right)a^n=\lambda.\]
\end{lema}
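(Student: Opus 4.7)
The plan is to recast the equation as a fixed point of a single real variable and apply the intermediate value theorem. Define
\[
F(a):=a^n\int_{|y|\le a^\gamma}f(y)\,dy,\qquad a>0,
\]
and show that $F:(0,\infty)\to(0,\infty)$ is continuous with $F(0^+)=0$ and $F(\infty)=\infty$. Since $\lambda>0$ is fixed, the existence of $a$ with $F(a)=\lambda$ follows at once.

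First I would verify continuity. The factor $a^n$ is clearly continuous in $a>0$, so it suffices to handle $G(a):=\int_{|y|\le a^\gamma}f(y)\,dy$. For a sequence $a_k\to a$ in $(0,\infty)$, the indicator functions $\mathcal{X}_{\{|y|\le a_k^\gamma\}}$ converge pointwise a.e. to $\mathcal{X}_{\{|y|\le a^\gamma\}}$ and are dominated by $\mathcal{X}_{\{|y|\le (2a)^\gamma\}}$ for $k$ large; since $f$ is locally integrable, the dominated convergence theorem gives $G(a_k)\to G(a)$, and hence $F$ is continuous.

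Next I would compute the boundary behavior. As $a\to 0^+$, the factor $a^n$ tends to $0$ while $G(a)$ is bounded (indeed $G(a)\le\int_{|y|\le 1}f<\infty$ once $a^\gamma\le 1$), so $F(a)\to 0$. As $a\to\infty$, pick any $a_0>0$; since $f$ is positive a.e.\ and locally integrable, $c:=G(a_0)=\int_{|y|\le a_0^\gamma}f(y)\,dy$ is a strictly positive finite number. For $a\ge a_0$ the monotonicity of $G$ yields $F(a)\ge c\,a^n\to\infty$.

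The only step requiring a small amount of care is the positivity of the limiting constant $c$, which forces me to use the hypothesis ``$f$ positive'' (i.e.\ $f>0$ a.e.) rather than merely ``$f\ge 0$''; without it one could have $f\equiv 0$ on every ball, making $F\equiv 0$. Once continuity and the two limits are in hand, the intermediate value theorem applied to $F$ on a suitable interval $[a_1,a_2]$ with $F(a_1)<\lambda<F(a_2)$ produces the desired $a\in\R^+$, concluding the proof.
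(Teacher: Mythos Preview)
Your argument is correct: defining $F(a)=a^n\int_{|y|\le a^\gamma}f(y)\,dy$, checking continuity via dominated convergence, and reading off the limits $F(0^+)=0$ and $F(\infty)=\infty$ so that the intermediate value theorem produces the desired $a$, is exactly the natural route. Note that the present paper does not supply its own proof of this lemma but simply cites \cite{O-P}; the argument there is the same intermediate value argument you give, so there is nothing further to compare.
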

\bigskip
We are now in position to prove the mentioned theorem.
\begin{proof}[Proof of Theorem~\ref{teo: caso m_phi}] As in the proof given in \cite{O-P}, we define the sets $G_k=\{x: 2^k<|x|\leq 2^{k+1}\}$, $I_k=\{x: 2^{k-1}<|x|\leq 2^{k+2}\}$, $L_k=\{x: 2^{k+2}<|x|\}$ and $C_k=\{x: |x|\leq 2^{k-1}\}$. Without loss of generality we can write $g=fv$ and we will assume $t=1$ by homogeneity. Then
\begin{align*}
uw\left(\{x\in \R^n: M_{\Phi}(g)(x)>v(x)\}\right)=&\s{k\in\Z}{}{uw\left(\{x\in G_k: M_{\Phi}(g\mathcal X_{I_k})(x)>v(x)\}\right)}\\
&+ \s{k\in\Z}{}{uw\left(\{x\in G_k: M_{\Phi}(g\mathcal X_{L_k})(x)>v(x)\}\right)}\\
&+uw\left(\{x\in \mathbb R^n: M_{\Phi}(g\mathcal X_{C_k})(x)>v(x)\}\right)\\
=& I+II+III.
\end{align*}

We shall begin by estimating $I$. Recalling that $w=1/\Phi(1/v)$ and $v=|x|^\beta$, if $x\in G_k$ we have
\begin{equation*}
\frac{1}{\Phi\left(\frac{1}{2^{(k+1)\beta}}\right)}\leq w(x)<\frac{1}{\Phi\left(\frac{1}{2^{k\beta}}\right)},
\end{equation*}
and also
\begin{equation*}
2^{(k+1)\beta}\leq v(x)<2^{k\beta}.
\end{equation*}

Using these estimates and the weak modular type of $M_{\Phi}$ with weight $u$ (see \cite{K-P-S}) we get
\begin{align*}I\leq&\s{k\in\Z}{}{\frac{1}{\Phi\left(\frac{1}{2^{k\beta}}\right)}u\left(\left\{x\in G_k: M_{\Phi}(g\mathcal X_{I_k})(x)>2^{(k+1)\beta}\right\}\right)}\\
\leq& \s{k\in\Z}{}{\frac{1}{\Phi\left(\frac{1}{2^{k\beta}}\right)}\int_{\R^n}\Phi\left(\frac{g\mathcal X_{I_k}(x)}{2^{(k+1)\beta}}\right)Mu(x)\,dx}\\
\leq& C\s{k\in\Z}{}{\frac{1}{\Phi\left(\frac{1}{2^{k\beta}}\right)} \Phi\left(\frac{1}{2^{k\beta}}\right)\int_{I_k}\Phi(g(x))Mu(x)\,dx}\\
\leq&C\s{k\in\Z}{}{\int_{I_k}\Phi(g(x))Mu(x)\,dx}\\
=&C\int_{\R^n}\Phi(g(x))Mu(x)\,dx,
\end{align*}
where we have used the submultiplicativity of $\Phi$ . This gives the desired estimation for $I$.

In order to estimate $II$, we define, for $x\in G_k$
\[F(x)=C_n\int_{|y|>|x|}\frac{\Phi(g(y))}{|y|^n}\,dy\]
where $C_n=c_n4^n$ and $c_n$ is the measure of the surface area of the unit sphere $S^{n-1}$. Fix $x\in G_k$ and let $B=B(x_0,r)$ be a ball containing $x$. We want to obtain an upper bound for $\norm{g\mathcal X_{L_k}}_{\Phi,B}$. Note that if $y\in L_k\cap B$, since $x\in G_k$ we have that $\frac{|y|}{2}>|x|$, and then
\[2r\geq |y-x|>|y|-|x|>\frac{|y|}{2}.\]
Since $\Phi$ is submultiplicative, this leads to
\begin{align*}
\frac{1}{|B|}\int_B \Phi\left(\frac{g\mathcal X_{L_k}(y)}{(1/\Phi^{-1}(1/F(x)))}\right)\,dy&\leq \frac{1}{|B|}\int_B \Phi(\Phi^{-1}(1/F(x)))\Phi(g\mathcal X_{L_k}(y))\,dy\\
&\leq \frac{1}{F(x)}\frac{1}{|B|}\int_{B\cap L_k} \Phi(g(y))\,dy\\
&\leq \frac{c_n4^n}{F(x)}\int_{|y|>|x|}\frac{\Phi(g(y))}{|y|^n}\,dy\\
%&\leq \frac{1}{F(x)}C\int_{|y|>|x|}\frac{\Phi(g(y))}{|y|^n}\,dy\\
&=\frac{1}{F(x)}F(x)=1.
\end{align*}
Thus, we get that
\[\norm{g\mathcal X_{L_k}}_{\Phi,B}\leq \frac{1}{\Phi^{-1}(1/F(x))}\]
 and  we can proceed as follows
\begin{align*}
II&\leq \s{k\in \Z}{}{}uw\left(\left\{x\in G_k: \frac{1}{\Phi^{-1}(1/F(x))}>v(x)\right\}\right)\\
&=\s{k\in \Z}{}{}uw\left(\left\{x\in G_k: \Phi^{-1}(1/F(x))\}<\frac{1}{v(x)}\right\}\right)\\
&=\s{k\in \Z}{}{}uw\left(\left\{x\in G_k: F(x)>w(x)\right\}\right)\\
&\leq \s{k\in\Z}{}{\frac{1}{\Phi\left(\frac{1}{2^{k\beta}}\right)}}u\left(\left\{x\in G_k: F(x)>\frac{1}{\Phi\left(\frac{1}{2^{(k+1)\beta}}\right)}\right\}\right)\\
&\leq C \int_0^\infty u\left(\{x\in \R^n: F(x)>t\}\right)\,dt\\
&= C\int_{\R^n}F(x)u(x)\,dx\\
&=C\int_{\R^n}\Phi(g(y))\frac{1}{|y|^n}\int_{|y|>|x|}u(x)\,dx\,dy\\
&\leq C\int_{\R^n}\Phi(g(y))Mu(y)\,dy,
\end{align*}
giving the estimation for the second term.

 In order to estimate $III$, we define, for $x\in G_k$
 \[G(x)=\frac{C_n}{|x|^n}\int_{|y|\leq\frac{|x|}{2}}\Phi(g(y))\,dy.\]
 For a fixed  $x\in G_k$, let us take $B=B(x_0,r)$ a ball containing $x$. If  $y\in C_k$, we get $|y|\leq\frac{|x|}{2}$. By following the same arguments as in the estimation of $II$ we obtain that $\norm{g\mathcal X_{C_k}}_{\Phi,B}\leq1/(\Phi^{-1}(1/G(x)))$. Hence
\begin{equation*}
III\leq uw\left(\{x\in \R^n: G(x)>w(x)\}\right).
\end{equation*}
Let $\gamma=n/(-n-r\beta)$. Note that $\gamma>0$ since, by hypothesis, $\beta<-n$. Now applying Lemma~\ref{lema_igualdad_de_integral} with $\gamma$ and $\lambda=1$, there exists $a>0$ which verifies
\begin{equation}\label{integral_phi_de_g_igual_a_uno}
\left(\int_{|y|\leq a^\gamma}\Phi(g(y))\,dy\right)a^n=1.
\end{equation}
Then,
\begin{align*}
uw\left(\{x\in \R^n: G(x)>w(x)\}\right)&=uw\left(\left\{x: |x|\leq a^\gamma, \frac{C_n}{|x|^n}\int_{|y|\leq \frac{|x|}{2}}\Phi(g(y))\,dy>\frac{1}{\Phi\left(\frac{1}{|x|^\beta}\right)} \right\}\right)\\
+&\sum_{k=0}^{\infty}uw\left(\left\{x: 2^ka^\gamma<|x|\leq 2^{k+1}a^\gamma, \frac{C_n}{|x|^n}\int_{|y|\leq \frac{|x|}{2}}\Phi(g(y))\,dy>\frac{1}{\Phi\left(\frac{1}{|x|^\beta}\right)} \right\}\right)\\
&=A+B.
\end{align*}
Note that
\begin{equation}\label{formula_prueba_teo_M_phi}
\left\{x: \frac{C_n}{|x|^n}\int_{|y|\leq a^\gamma}\Phi(g(y))\,dy>\frac{1}{\Phi\left(\frac{1}{|x|^\beta}\right)} \right\}=\left\{x: \frac{\Phi\left(\frac{1}{|x|^\beta}\right)}{|x|^n}>{C_n}^{-1}a^n\right\}.
\end{equation}
If we set $z=|x|^{-\beta}$ then \[|x|^{-n}\Phi\left(\frac{1}{|x|^\beta}\right)=z^{r+n/\beta}(1+\log^+z)^{\delta}=z^{\alpha}(1+\log^+z)^{\delta}=:\varphi(z),\] where $\alpha=r+n/\beta$, which is positive because $\beta<-n$. It can be proved (see for example \cite[Lemma 1.1.27]{TesisTefi}) that there exists a constant $D\geq 1$  such that  \[\frac{1}{D} z^{1/\alpha} (1+\log^+z)^{-\delta/\alpha}\leq\varphi^{-1}(z)\leq D z^{1/\alpha} (1+\log^+z)^{-\delta/\alpha}.\]
 With this in mind, we can write \eqref{formula_prueba_teo_M_phi} as follows
\begin{align*}
\left\{x:\varphi(|x|^{-\beta})>C_n^{-1}a^n\right\}&= \left\{x: |x|^{-\beta}>\varphi^{-1}(C_n^{-1}a^n)\right\}\\
&\subset \left\{x: |x|^{-\beta}>\frac{(C_n^{-1}a^n)^{1/\alpha}}{D(1+\log^+(C_n^{-1}a^n))^{\delta/\alpha}}\right\}\\
& = \left\{x: D\left(\frac{(1+\log^+(C_n^{-1}a^n))^\delta}{C_n^{-1}a^n}\right)^{1/\alpha}>|x|^{\beta}\right\}\\
& =\left\{x: D^{1/\beta}\left(\frac{(1+\log^+(C_n^{-1}a^n))^{\delta}}{C_n^{-1}a^n}\right)^{1/(\alpha\beta)}<|x|\right\}\\
&=\left\{x: D^{1/\beta}\left(\frac{C_n^{-1}}{(1+\log^+(C_n^{-1}a^n))^{\delta}}\right)^{-1/(\alpha\beta)}a^\gamma<|x|\right\}.
\end{align*}
Since  $D\geq 1$, we have that $D^{1/\beta}\left(\frac{C_n^{-1}}{(1+\log^+(C_n^{-1}a^n))^{\delta}}\right)^{-1/(\alpha\beta)}=:C_0<1$. Thus, we get
\begin{align*}
A&\leq uw\left(\{x: C_0a^\gamma<|x|\leq a^\gamma\}\right)\\
&\leq \int_{|x|>C_0a^{\gamma}}u(x)v^r(x)\,dx\\
&=\s{k=1}{\infty}{\int_{C_02^{k-1}a^\gamma\leq|x|<C_02^ka^\gamma}u(x)v^r(x)\,dx}\\
&\leq \s{k=1}{\infty}{\frac{1}{(C_02^{k-1}a^\gamma)^{-r\beta}}\int_{|x|<C_02^ka^\gamma}u(x)\,dx}\\
&=\s{k=1}{\infty}{2^nC_0^{r\beta}2^{(k-1)(n+r\beta)}\int_{|y|\leq a^\gamma}\Phi(g(y))\left(\frac{1}{(2^ka^\gamma)^n}\int_{|x|<2^ka^\gamma}u(x)\,dx\right)\,dy}\\
&\leq C\int_{\R^n}\Phi(g(y))Mu(y)\,dy.
\end{align*}
To finish the proof, it only remains to estimate part $B$.
\begin{align*}
B&\leq \s{k=0}{\infty}{uv^r\left(\{x: 2^ka^{\gamma}<|x|\leq 2^{k+1}a^\gamma\}\right)}\\
&\leq \s{k=0}{\infty}{\frac{1}{(2^ka^\gamma)^{-r\beta}}\int_{|x|\leq 2^{k+1}a^\gamma}u(x)\,dx}\\
&\leq \s{k=0}{\infty}{\frac{(2^{k+1}a^\gamma)^n}{(2^ka^\gamma)^{-r\beta}}\frac{1}{(2^{k+1}a^{\gamma})^n}\int_{|x|\leq 2^{k+1}a^\gamma}u(x)\,dx}\\
&=C\s{k=0}{\infty}{2^n2^{k(n+r\beta)}\int_{|y|\leq a^\gamma}\Phi(g(y))\left(\frac{1}{(2^{k+1}a^\gamma)^n}\int_{|x|\leq 2^{k+1}a^\gamma}u(x)\,dx\right)\,dy}\\
&\leq C\s{k=0}{\infty}{2^{k(n+r\beta)}\int_{\R^n}\Phi(g(y))Mu(y)\,dy}\\
&\leq C\int_{\R^n}\Phi(g(y))Mu(y)\,dy,
\end{align*}
which completes the proof.
\end{proof}

%%%%%%%%%%%%%%%%%%%%%%%%%%%%%%%%%%%%%%%%%%%%%%

\def\cprime{$'$} \def\cprime{$'$}

\end{document}